\titleformat{\subsection}[runin]{\normalfont\bfseries}{\thesubsection.}{.5em}{}[.]\titlespacing{\subsection}{0pt}{2ex plus .1ex minus .2ex}{.8em}
\titleformat{\subsubsection}[runin]{\normalfont\itshape}{\thesubsubsection.}{.3em}{}[.]\titlespacing{\subsubsection}{0pt}{1ex plus .1ex minus .2ex}{.5em}
\definecolor{darkred}{rgb}{0.9,0,0.3}
\definecolor{darkblue}{rgb}{0,0.3,0.9}
\def\comment#1{\ifthenelse{\isodd{\value{page}}}{\marginpar{\raggedright\scriptsize{\textcolor{darkred}{#1}}}}{\marginpar{\raggedleft\scriptsize{\textcolor{darkred}{#1}}}}}  
\definecolor{vdarkred}{rgb}{0.6,0,0.2}
\definecolor{vdarkblue}{rgb}{0,0.2,0.6}
\numberwithin{equation}{section}
\numberwithin{figure}{section}
\theoremstyle{plain} %plain, definition, remark
\newtheorem{theorem}{Theorem}[section]
\newtheorem*{theorem*}{Theorem}
\newtheorem{lemma}[theorem]{Lemma}
\newtheorem*{lemma*}{Lemma}
\newtheorem*{corollary*}{Corollary}
\newtheorem{proposition}[theorem]{Proposition}
\newtheorem*{proposition*}{Proposition}
\newtheorem{definition}[theorem]{Definition}
\newtheorem*{definition*}{Definition}
\newtheorem*{conjecture*}{Conjecture}
\theoremstyle{definition} %plain, definition, remark
\newtheorem*{example*}{Example}
\newtheorem{remark}[theorem]{Remark}
\newtheorem*{remark*}{Remark}
\renewcommand{\leq}{\leqslant}
\renewcommand{\geq}{\geqslant}
\renewcommand{\epsilon}{\varepsilon}
\begin{document}

\begin{center}

\begin{huge}{Proof of the Weak Local Law for Wigner Matrices using Resolvent Expansions }
\end{huge}

\vspace{10mm}
Vlad Margarint\\

\end{center}

\vspace{5mm}

\begin{abstract}
The aim of this paper is to provide a novel proof for the Local Semicircle Law for the Wigner ensemble. The core of the proof is the intensive use of the algebraic structure that arises, i.e. resolvent expansions and resolvent identities. On the analytic side, concentration of measure results and high probability bounds are used. The conclusion is obtained using a bootstrapping argument that provides information about the change of the bounds from large to small scales. This approach leads to a new and shorter proof of the Weak Local Law for Wigner Matrices, that exploits heavily the algebraic structure appearing in the setting.
\end{abstract}

\vspace{20mm}

\begin{Large}

\textbf{Introduction}
\vspace{5mm}

\end{Large}
Random matrices are fundamental objects for the modeling of complex systems. A basic example is the Wigner ensemble. The Wigner matrices $H$ are $N \times N$ symmetric or Hermitian matrices whose entries are random variables that are independent up to the constraint imposed by the symmetry $H=H^*\,.$ Wigner made the observation that the distribution of the distances between  consecutive eigenvalues (that is called the gap distribution) follows an universal pattern. Furthermore, he predicted that the universality is not restricted to the Wigner ensemble, but it should hold for any system of sufficient large complexity, described by a large Hamiltonian. The Wigner-Dyson-Gaudin-Mehta conjecture (\cite{erdHos2010bulk}, \cite{erdHos2011universality}, \cite{erdHos2012local}) states that the symmetry class of the matrix is the one that dictates the gap distribution, and not the details of the distribution of the matrix entries. The Local Law for Wigner Matrices is not a new result. It was originally proved in the following series of works \cite{erdHos2013local}, \cite{erdHos2009local}, \cite{erdHos2012rigidity}, \cite{tao2013random}, \cite{tao2011random} and was used to solve the Wigner-Dyson-Mehta conjecture.
 Statements analogous to the Weak Local Law for Wigner Matrices have been proved for many models, including hermitian matrices with i.i.d entries \cite{erdHos2009local}, \cite{erdHos2012rigidity}, \cite{tao2013random}, general independent entries \cite{ajanki2017universality}, i.i.d. entries and general expectations \cite{he2017isotropic}, \cite{lee2013local}, correlated entries \cite{ajanki2016local}, \cite{ajanki2016stability}, \cite{che2017universality}, \cite{erdHos2017random}, $d$-regular graphs \cite{bauerschmidt2017local}, sample covariance matrices \cite{bloemendal2014isotropic}, \cite{knowles2017anisotropic}, \cite{alt2017local} as well as non-hermitian random matrices with i.i.d. entries \cite{bourgade2014local}, general independent entries \cite{alt2018local}, products of matrices with i.i.d entries \cite{nemish2017local} and $\beta$-ensembles \cite{bourgade2014universality}.\\
 In this paper, we focus on the Wigner Hermitian case. The strategy of proof is based on expanding on individual entries and using the resolvent expansion and resolvent identity formula. The use of expanding in individual entries of the random matrix was also used in the proof in \cite{erdos2010universality}. The resolvent expansion is very similar in spirit with the cumulant expansion technique used in \cite{he2017mesoscopic}, \cite{khorunzhy1996asymptotic} and \cite{lee2015edge} for more general models. However, the current work to the knowledge of the author, is the only paper that uses the resolvent expansion technique in the context of Wigner matrices. The advantage of working with Resolvent identities is the simplification of the proof of the Weak Local Law in the case of Wigner Matrices, by using extensively the algebraic identities that appear in the setting. Also, the use of this method provides a short proof of the Weak Local Law in this case. The paper is kept at the current length only for the completeness of the exposure, but one can simplify some parts of the proof further. The main advantage of the method is that it provides a simpler and shorter proof, compared with the other works on the topic. \\
 
The paper is divided into four sections. The first section contains the main definitions and notations that are used throughout the paper. In the second section, we use the resolvent expansion  to obtain a concentration result, namely the Concentration of Measure Lemma. The main ingredient for proving the result is the use of a generalized version of Efron-Stein inequality. In the third section, the resolvent expansion along with the Concentration of Measure Lemma are used to provide high probability bounds for the quantity $1+sz+s^2$, where $s=\frac{1}{N}\sum_{i=1}^NG_{ii}\,,$ with $(G_{ij})_{i,j \in [1,N]}$ being the resolvent matrix of the Wigner matrix $(H_{ij})_{i, j \in [1, N]}$. The last section is divided into two parts. The first part is a stability analysis argument done rather straightforward that shows that the Stieltjes transform of the Semicircle Law is close to the average $s \;:=\; \frac{1}{N}\sum_{i=1}^NG_{ii}\,.$ The last step is a bootstrapping argument that involves all the results discovered in the previous steps. The main use of this argument is to track the information about the bounds from the scale $\eta$ to the scale $\eta/N^{\delta}\,,$ for $\delta >0\,.$

The papers \cite{he2017mesoscopic}, \cite{he2017isotropic} develop a more general set of ideas that do not cover this result directly. Given this sequence of papers, we believe that this result consolidates this direction of research.
 
\vspace{4mm}
 
\textbf{Acknowledgements.} This result is obtained in ETH Z\"urich under the supervision of Prof. Dr. Antti Knowles. The author is grateful to Prof. Dr. Antti Knowles for the careful guiding into understanding the problem.

\vspace{10mm}

\section{Preliminaries, Notations and Main Theorem}

Let $H=H^*$ be a $N \times N$ random Hermitian matrix with eigenvalues $ \lambda_1 \geq \lambda_2 \geq \lambda_3 \geq\cdots \geq \lambda_N\,.$ We normalize the matrix $H$ such that its eigenvalues are of order one.

Since most quantities that we are interested in depend on $N$ we shall almost always omit the explicit argument $N$ from our notations. Hence, every quantity that is not explicitly a constant is a sequence indexed by $N \in \mathbb{N}\,.$

In order to study the eigenvalues of the matrix $H$ we define the $resolvent$  by 
\begin{align*}
G(z)\; := \; (H-zI_n)^{-1},
\end{align*}
where $z \in \mathbb{C} \setminus \{\lambda_1,\lambda_2, \lambda_3,\ldots,\lambda_N \}$ is the $\textit{spectral parameter}\,.$ We identify in the following the matrix $zI_n$ with $z$\,. From now on, we use the notation 
\begin{align*}
z=E+i\eta
\end{align*}
for the real and imaginary part of $z$\,. We always assume that $\eta>0$\,, i.e. $z$ lies in the upper half-plane.

Let us consider $ \tilde{H} = H + \Delta $ to be a perturbation of $H$. Then the resolvent of $\tilde{H}$ is given by $\tilde{G}(z)\;:=\;(\tilde{H}-z)^{-1}.$ By iterating the resolvent identity $\tilde{G}\;=\;G-G\Delta\tilde{G}$\,, we get, as in \cite{benaych2016lectures},  the resolvent expansion 
\begin{equation}\label{eq4}
\tilde{G}(z) \;=\; \sum\limits_{i=0}^{N-1}G(z)(-\Delta G(z))^{i} + \tilde{G}(z)(-\Delta G(z))^{N}.
\end{equation}

An important property that the resolvent satisfies is the $\textit{Ward identity}$  given by
\begin{equation}\label{eq5}
\sum\limits_{j=1}^N|G_{ij}|^2=\frac{1}{\eta}\operatorname{Im}G_{ii}\,.
\end{equation}

From now on, we consider the case when $H$ is a matrix as in the following definition.
\begin{definition}\label{defWigner}
A Wigner matrix is a $N \times N$ random Hermitian matrix $H \; =\;H^*$ whose entries $H_{ij}$ satisfy the following conditions.
\begin{enumerate}
\item The upper-triangular entries $(H_{ij}:1 \leq i \leq j \leq N)$ are independent. 
\item For all $i, j,$ we have $\mathbb{E}(H_{ij})=0$ and $\mathbb{E}|H_{ij}|^{2}=N^{-1}(1+O(\delta_{ij}))$.
\item The random variables $\sqrt{N}H_{ij}$ are bounded in any $L^p$ space, uniformly in $N, i , j\,.$
\end{enumerate}
\end{definition}

The conditions $(i)$ and $(ii)$ are the usual ones. Condition $(iii)$ states that for each $p \in \mathbb{N}$  there exists a constant $C_p$ such that $|| \sqrt{N}H_{ij}||_p \leq C_p$ for all $N, i, j,$ where we used the notation
$$||X||_p \;:=\;(\mathbb{E}|X|^p)^{1/p}\,. $$

We define as in \cite{benaych2016lectures} the $\textit{Semicircle Law}$ 
\begin{equation}
\rho(dx)\;:=\;\frac{1}{2\pi}\sqrt{(4-x^2)_{+}}dx\,,
\end{equation} 
and its Stieltjes transform 
\begin{equation}
m(z)\;:=\; \int{\frac{\rho(dx)}{x-z}}\,.
\end{equation}

In order to state the Local Semicircle Law we introduce, as in \cite{benaych2016lectures}\,, the notion of high probability bounds. 
\begin{definition}[Stochastic domination]
Let $U^{(N)}$ a possibly $N$-dependent parameter set.\\
Let $X=\left(X^{(N)}(u): N \in \mathbb{N}, u \in U^{(N)}\right)$ and $Y=\left(Y^{(N)}(u) : N \in \mathbb{N}, u \in U^{(N)} \right) $ be two families of nonnegative random variables. We say that $X$ is stochastically dominated by $Y$, uniformly in $u$, if for all $\epsilon > 0$ and large $D>0$ we have 
\begin{equation}
\sup_{u \in U^{(N)}}\mathbb{P}\left[ X^{(N)}(u) > N^{\epsilon}Y^{(N)}(u)\right] \leq N^{-D}
\end{equation}
for large $N \geq N_{0}(\epsilon, D)\,.$\\

The stochastic domination is always uniform in all parameters that are not explicitly fixed.\\

If $X$ is stochastically dominated by $Y$, uniformly in $u$, we use the notation $X \prec Y$ or $X=O_{\prec}(Y)$\,. Moreover, if for some complex family $X$ we have that $|X| \prec Y\,,$ we also write $X=O_{\prec}(Y)\,.$\\

We say that an event $\Xi \equiv \Xi^{(N)}$ holds with high probability if $1-\mathbf{1}(\Xi) \prec 0$\,, i.e. if for any $D>0$ there is $N_0(D)$ such that for all $N \geq N_0(D)$ we have that $\mathbb{P}(\Xi^{(N)})\geq 1-N^{-D}$\,.
\end{definition}
\noindent
The notion of $\textit{ stochastic domination}$ provides a simple way of making precise statements of the form $"X\textit{is bounded with high probability by Y up to small powers of} \hspace{1.75mm} N^{\epsilon}".$ Note that this notation implicitly means that $\prec$ is uniform in the indices $i, j \in \llbracket 1, N \rrbracket\,.$

For fixed $\gamma \geq 0$ we consider the domain 
$$\bold{S}\equiv \bold{S}_{N}(\gamma)\;:=\;\{z= E+i\eta :  -N \leq E \leq N, \hspace{2mm}  N^{-1+\gamma} \leq \eta \leq N  \}\,. $$
The parameter $\gamma$ should be considered forever fixed. Throughout this paper,  all the estimates depend on $\gamma$ but we do not indicate or track this dependency in the equations or identities that we study.

For proving the main results, we use the spectral parameters
\begin{align}\label{equiv5.1}
z_0=E+i\eta_0\,, \hspace{4mm}
z=E+i\eta\,, \hspace{4mm}
N^{-1+\gamma} \leq \eta_0 \leq \eta \leq N\,.
\end{align}
We also use the function $F_z : [0,1] \to \mathbb{R}$ defined by
\begin{equation}
F_z(r)\;:=\; \left[\left(1+\frac{1}{\sqrt{|z^2-4|}}\right)r \right]\wedge \sqrt{r} \,.
\end{equation}

Our main result is a new way of proving the following Theorem  using intensively the algebraic structure of the the resolvent expansions and resolvent identities\,.
\begin{theorem}
[Local Semicircle Law for Wigner Matrices]\label{theoremlocal}
Let $H$ be a Wigner matrix and let 
\begin{equation}
\psi(z) \;:=\; \frac{1}{\sqrt{N\eta}} 
\end{equation} 
be a deterministic error parameter.\\
Then, we have that 
\begin{align}
\max_{i \in \llbracket 1, \ldots, N \rrbracket}|G_{ii}(z)-m(z)|&\prec F_{z}(\psi(z))\,,\nonumber\\
\max_{i \neq j }|G_{ij}(z)| &\prec \psi(z) \,,
\end{align}
uniformly for all $ z \in \mathbb{C}_{+}\,,$ such that $\eta \geq N^{-1+\gamma}\,.$
\end{theorem}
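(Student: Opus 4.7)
The plan is to follow the four-step route outlined in the introduction, tying together Schur-type resolvent identities, a concentration estimate for large quadratic forms in the entries of $H$, the stability of the semicircle self-consistent equation, and a scale-by-scale bootstrap. First I would iterate the resolvent identity $\tilde G = G - G \Delta \tilde G$ with $\Delta$ the perturbation removing the $i$-th row and column, so as to express $G_{ii}$ via the Schur complement
\begin{equation*}
\frac{1}{G_{ii}(z)} \;=\; -z + H_{ii} - \sum_{k,l \neq i} H_{ik}\,G^{(i)}_{kl}(z)\,H_{li}\,,
\end{equation*}
and analogously to express $G_{ij}$ for $i\neq j$ as a sum of products $G_{ii} H_{ik} G^{(i)}_{kj}$. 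Writing the right-hand side as $-z - \frac{1}{N}\tr G^{(i)} + Z_i$, where $Z_i$ collects the fluctuation of the quadratic form around its partial expectation, one arrives after averaging over $i$ at an approximate self-consistent equation $1 + z s + s^2 = O_\prec(\txt{error})$ for $s = \frac{1}{N}\sum_i G_{ii}$.

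Second, I would apply the Concentration of Measure Lemma of Section~2 to each $Z_i$. The Ward identity~\eqref{eq5} implies that, conditionally on the $i$-th minor, the variance of the quadratic form is of order $N^{-2}\sum_{k,l}|G^{(i)}_{kl}|^2 = (N\eta)^{-1}\,\tfrac{1}{N}\operatorname{Im}\tr G^{(i)}$, which is at most $\psi(z)^2$ up to an a priori bound on $\operatorname{Im} s$. A generalized Efron--Stein estimate then gives $|s - \mathbb E s| \prec \psi(z)$ and, via the Schur formula above, $\max_i |Z_i| \prec \psi(z)$; combined with $|\tr G - \tr G^{(i)}| \leq C$ this yields the high-probability bound $|1 + zs + s^2| \prec \psi(z)$ (on the a priori event that $s$ stays in a neighborhood of $m$).

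Third comes the stability step. Since $m$ solves $1 + zm + m^2 = 0$, factoring gives
\begin{equation*}
1 + zs + s^2 \;=\; (s - m)(s - m_-)\,,\qquad m_- \;=\; -z - m\,,
\end{equation*}
so $|s - m| \leq |1 + zs + s^2|/|s - m_-|$. Away from the spectral edges one has $|s - m_-| \asymp \sqrt{|z^2-4|}$, which produces the stability factor $1 + 1/\sqrt{|z^2-4|}$ that appears in $F_z$; near the edge the square-root branch $\sqrt{r}$ of $F_z$ takes over through the quadratic inequality. This turns the bound on the polynomial into $|s - m| \prec F_z(\psi)$. From there, the per-entry bound $\max_i |G_{ii} - m| \prec F_z(\psi)$ follows by noting that $G_{ii} - s$ is itself governed by the fluctuation $Z_i$, while $\max_{i\neq j}|G_{ij}| \prec \psi$ comes directly from the off-diagonal Schur identity after inserting the just-obtained control on $G_{ii}$ and on $s$.

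Finally, I would close everything with the bootstrap. For $\eta$ of order $1$ the trivial bound $|G_{ii}| \leq \eta^{-1}$ supplies the a priori input required for the concentration and stability steps, so the target bound holds at that scale. The induction step transports a bound that holds at spectral parameter $z_0 = E + i\eta_0$ down to $z = E + i(\eta_0/N^\delta)$ by using the Lipschitz continuity of $z \mapsto G(z)$ on the upper half-plane (its derivative being of norm $\eta^{-2}$), which degrades the bound by at most a factor $N^{O(\delta)}$; this weakened bound is then fed back into steps two and three to upgrade it again to $F_z(\psi)$ at the new scale. Iterating $O(1/\delta)$ times descends from $\eta = 1$ to $\eta = N^{-1+\gamma}$. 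The main obstacle, I expect, will be precisely this bootstrap near the spectral edge: the stability factor degenerates as $|z^2-4| \to 0$, so one has to verify that the a priori bound carried down to the smaller scale is still good enough to beat the degenerating denominator, which is why the minimum with $\sqrt{r}$ is built into $F_z$ and why the error parameter $\psi$ must be tracked sharply at every iteration.
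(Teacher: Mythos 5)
Your plan is a correct outline of the classical Erd\H{o}s--Schlein--Yau route to the local law, but it is not the route this paper takes, and the point of departure is exactly the feature the paper advertises as its contribution. You open with the Schur complement $1/G_{ii} = -z + H_{ii} - \sum_{k,l\neq i} H_{ik}\,G^{(i)}_{kl}\,H_{li}$, which removes the whole $i$-th row and column to form the minor $G^{(i)}$, and you then need a large-deviation estimate for the quadratic form $Z_i$. The paper never inverts a diagonal entry and never passes to a minor: its perturbation $\Delta^{(1i)}$ zeroes out only the single pair of entries $(H_{1i},H_{i1})$, and the algebraic engine is the iterated resolvent identity $\tilde G = G - G\Delta\tilde G$ applied to $G$ and $G^{(1i)}$. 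Starting from $(H-z)G=I$, i.e.\ $1+zG_{11}=\sum_i H_{1i}G_{i1}$, it expands each $G_{i1}$ in $\Delta^{(1i)}$ term by term, takes $\mathbb E_1=\mathbb E[\,\cdot\,\mid\theta]$, and then converts the resulting $G^{(1i)}_{ii}G^{(1i)}_{11}$ back to $G_{ii}G_{11}$ by a second expansion, so that no quadratic-form (Hanson--Wright--type) bound is ever needed. Correspondingly, the Concentration of Measure Lemma you propose to apply to $Z_i$ does not do what you ask of it: as proved, it controls $G_{kl}-\mathbb E_1 G_{kl}$ for a \emph{single} resolvent entry, via an Efron--Stein estimate over the map $h_i\mapsto G^i_{kl}(h_i)$ across the first row, and it enters Section~3 only through Lemma~3.1 to decouple products such as $G_{ii}G_{11}$. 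If you carry out your plan you will reproduce the standard proof of \cite{benaych2016lectures} and \cite{bauerschmidt2017local}; to recover the paper's proof you must replace the Schur complement by the entry-wise resolvent expansion and the quadratic-form concentration by the Efron--Stein bound on individual entries. Your stability step (factoring $1+zs+s^2=(s-m)(s-m_-)$ and the quadratic inequality near the edge) and your bootstrap by $O(1/\delta)$ multiplicative steps in $\eta$ using Lipschitz continuity of $G$ are essentially identical to the paper's Lemma~\ref{Lem4.1}, Lemma~\ref{Lem2.1} and Proposition~\ref{Prop4.3}, so the last two stages match.
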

\begin{remark}
Note that the rate of convergence at the edge of the spectrum is not optimal (see the works \cite{erdHos2012rigidity} and \cite{erdHos2013local}).
\end{remark}

We now start to introduce the notations that we use throughout the paper.

Let us consider the indices $i, j, k, l \in \llbracket 1, N \rrbracket\,.$ We introduce the following notation
\begin{equation}
H_{kl}^{(ij)}\;:=\; H_{kl}\bold{1}\left( \{i,j\}\neq \{k,l \}\right)\,.
\end{equation}
We further define
\begin{equation}
\Delta^{(ij)} \;:=\; H-H^{(ij)}\,.
\end{equation}
Hence, the matrix $H^{(ij)}$ is an $N \times N $ matrix with the same elements as $H$ except that $H^{(ij)}_{ij}=0$ and $H^{(ij)}_{ji}=0$\,. 
For $i,j \in \llbracket 1, N \rrbracket$ the resolvent of $H^{(ij)}$ is given by
\begin{equation}
G^{(ij)}(z) \;=\; (H^{(ij)}-z)^{-1}\,.
\end{equation}

We mention that for the proof of the main Theorem, we often omit the spectral parameter $z$ from our notation, bearing in mind that we are typically dealing with random sequences indexed by $N \in \mathbb{N}$ of functions of $z \in \bold{S}\,.$

Let us consider a particular perturbation that is useful throughout the paper. Let $i \in \llbracket 1, N \rrbracket$ be a fixed index. We consider for $H$ as in the definition \ref{defWigner}\,, the perturbation
\begin{equation}
H=H^{(1i)}+\Delta^{(1i)}\,.
\end{equation}
Varying $i \in \llbracket 1, N \rrbracket\,,$ we obtain the perturbation of $H$ in each index. In order to simplify the notations in the identities that we study, throughout the paper we use the notation $h_{i}\; :=\; H_{1i}\,, $ for all $i \in \llbracket 1, N \rrbracket$\,.

For $k, l \in \llbracket 1, N \rrbracket\,,$ we define for each $i \in \llbracket 1, N \rrbracket\,,$ the functions $G_{kl}^{i} : \mathbb{C} \to \mathbb{C}$ and $\Gamma^i : \mathbb{C} \to \mathbb{C}$ by
\begin{align*}
G_{kl}^i(x)\;&:=\;G_{kl}(h_1,\ldots,h_{i-1}, x, h_{i+1},\ldots,h_N)\,,\\ 
\Gamma^i(x)\;&:=\;\max_{k,l \in \llbracket 1, N \rrbracket}G^i_{kl}(h_1,\ldots,h_{i-1},x,h_{i+1},\ldots h_N)\,.
\end{align*}
From now on, we use the simplified notations $G^i_{kl}(\cdot) \equiv G^i_{kl} (h_1, \ldots,h_{i-1},\cdot, h_{i+1},\ldots h_N)\,,$ for $k,l,i$ $\in \llbracket 1, N \rrbracket$ and $\Gamma^i (\cdot) \equiv \Gamma^i (h_1, \ldots,h_{i-1}, \cdot, h_{i+1},\ldots, h_N)\,,$ for $i \in \llbracket 1, N \rrbracket\,.$

We use two types of notations when  performing resolvent expansions. The type of notation that we choose depends on the context. The scope of this is to emphasize  the proprieties of the resolvents that we use in each situation. In the following we give the connection between the two notations.
\begin{enumerate}
\item
For $k, l, i \in \llbracket 1, N \rrbracket\,,$ using the definition of $G_{kl}^i (x) $ we obtain that 
\begin{align}
|G_{kl}^i(h_i)|=|G_{kl}|\,.
\end{align}
\item
For $k, l, i \in \llbracket 1, N \rrbracket \,,$ using the definition of $H^{(ij)}$ and the definition of $G_{kl}^i(x)$ we obtain that 
\begin{align}
|G_{kl}^i(0) |= |G_{kl}^{(1i)}|\,.
\end{align}
\end{enumerate}

\section{Concentration of Measure}

In this section we prove a concentration result that is further used in the proof of the main Theorem. Before doing so, we prove two lemmas that provide useful bounds for  the entries of the resolvent. The main ingredients for proving the result of this section are a generalized version of the Efron-Stein inequality from \cite{boucheron2013concentration} and a truncation of the summations that appear with respect to two events. We start by proving the result for a fixed scale using the Chebyshev inequality, and after that we compute the equivalent bounds on smaller scales. Moreover, we first obtain estimates for fixed $k,l \in \llbracket 1, N \rrbracket\,,$ and after that, we let $k$ and $l$ vary in $\llbracket 1, N \rrbracket $ such that we obtain the same estimates for every entry of the resolvent.

We consider the matrix $\theta$ given by
\begin{align*}
\theta \;:=\;\left(H_{kl}\right)_{k,l\hspace{1mm} \in \llbracket2,N \rrbracket}\,.
\end{align*}
We also consider the probability space 
$\Omega=\{\theta, h_1,\ldots,h_N \}$ and we introduce the notation $$ \mathbb{E}_{1}\;:=\;\mathbb{E}(\cdot | \theta)$$ for the conditional expectation with respect to the elements of the matrix $ \theta$\,.

Let us define 
$$V_{\operatorname{Re}} \;:=\; \sum\limits_{i=1}^N(\operatorname{Re}G^i_{kl}(h_i)-\operatorname{Re}G^i_{kl}(0))^2\,,$$
and
$$V_{\operatorname{Im}} \;:=\; \sum\limits_{i=1}^N(\operatorname{Im}G^i_{kl}(h_i)-\operatorname{Im}G^i_{kl}(0))^2\,.$$

Using that $||f ||_{2q}^{2q} =||f_+||_{2q}^{2q}+||f_-||_{2q}^{2q} \hspace{4mm} \forall q \geq 1, \forall f \in \text{L}_{2q}$\,, we obtain that
\begin{align}
||\operatorname{Re}G_{kl}-\mathbb{E}_1\operatorname{Re}G_{kl}||_{2q}^{2q}=||(\operatorname{Re}G_{kl}-\mathbb{E}_1\operatorname{Re}{G_{kl}})_+||_{2q}^{2q}+||(\operatorname{Re}G_{kl}-\mathbb{E}_1\operatorname{Re}{G_{kl}})_-||_{2q}^{2q}\,.
\end{align}
Let $k_q \in \mathbb{R}\,,$ with $k_q <2\,.$ Using  the estimates in Theorem $15.5 $ in \cite{boucheron2013concentration}\,, we obtain that 
\begin{align}\label{equation1}
||(\operatorname{Re}G_{kl}-\mathbb{E}_1\operatorname{Re}G_{kl})_+||_{2q}^{2q}&+||(\operatorname{Re}G_{kl}-\mathbb{E}_1\operatorname{Re}G_{kl})_-||_{2q}^{2q}\nonumber\\
&\leq\sqrt{2k_q}||\sqrt{V_{\operatorname{Re}}^{+}}||_{2q}^{2q}+\sqrt{2k_q}||\sqrt{V_{\operatorname{Re}}^{-}}||_{2q}^{2q}\nonumber\\
&=\sqrt{2k_q}\mathbb{E}(|V^+_{\operatorname{Re}}|^q+|V^-_{\operatorname{Re}}|^q)\nonumber\\
&\leq 2\mathbb{E}|V^+_{\operatorname{Re}}+V_{\operatorname{Re}}^-|^{q}\nonumber\\
&=2||V_{\operatorname{Re}}||_{q}^{q}\,.
\end{align}
In the same manner, we obtain that
\begin{align}\label{fortheimaginary}
||\operatorname{Im}G_{kl}-\mathbb{E}_1\operatorname{Im}G_{kl}||_{2q}^{2q} \leq 
2||V_{\operatorname{Im}}||_{q}^{q}\,.
\end{align}

From now on, we fix $\delta \in (0, \delta_0)\,,$ where $\delta_0  \leq \left(0, \frac{\gamma}{3} \right).$
For the fixed value of $ \delta \in (0, \delta_0)\,, $  let $\epsilon_0$ be such that $ \epsilon_0 \leq \frac{3}{4}\left(\frac{1}{2}-\delta-\frac{\log 4}{\log N} \right) \,.$

The following result gives a bound for the entries of the Wigner matrices.
\begin{lemma}\label{2.1}
Let $H$ be a Wigner matrix. 
Then 
\begin{align*}
|H_{ij}| \prec N^{-1/2} \hspace{4mm} \forall i,j \in \llbracket 1, N \rrbracket\,.
\end{align*}
\end{lemma}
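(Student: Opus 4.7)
The plan is to deduce this directly from condition (iii) in Definition \ref{defWigner} by a Markov/Chebyshev-type argument at arbitrarily high moment. By condition (iii) we have, for every $p \in \mathbb{N}$, a constant $C_p$ (independent of $N, i, j$) such that $\|\sqrt{N} H_{ij}\|_p \leq C_p$, equivalently $\mathbb{E}|H_{ij}|^p \leq C_p^p N^{-p/2}$.

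Fix $\epsilon > 0$ and $D > 0$, and choose $p \in \mathbb{N}$ with $p\epsilon \geq D$. By Markov's inequality,
\begin{equation*}
\mathbb{P}\bigl(|H_{ij}| > N^{\epsilon} N^{-1/2}\bigr) \;=\; \mathbb{P}\bigl(|\sqrt{N} H_{ij}|^{p} > N^{p\epsilon}\bigr) \;\leq\; \frac{\mathbb{E}|\sqrt{N} H_{ij}|^{p}}{N^{p\epsilon}} \;\leq\; \frac{C_{p}^{p}}{N^{p\epsilon}}.
\end{equation*}
Since $C_p$ does not depend on $N$, $i$, or $j$, we obtain $\mathbb{P}(|H_{ij}| > N^{\epsilon} N^{-1/2}) \leq N^{-D}$ uniformly in $i, j$ for all sufficiently large $N$, which is exactly the statement $|H_{ij}| \prec N^{-1/2}$.

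There is essentially no obstacle here: the bound is a one-line consequence of the uniform $L^p$ control built into the definition of a Wigner matrix, and the uniformity of the constants $C_p$ in $i, j$ is what makes the stochastic domination hold uniformly in the indices as required.
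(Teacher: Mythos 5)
Your proof is correct and is essentially identical to the paper's: both deduce the bound from condition (iii) of Definition \ref{defWigner} via Markov's inequality applied to a sufficiently high moment $p$ chosen so that $p\epsilon \geq D$. (The only cosmetic difference is that you correctly track the constant as $C_p^p$ where the paper slightly loosely writes $C_p$, which does not affect the argument.)
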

\begin{proof}
Let us fix $ \epsilon \in (0, \epsilon_0)\,.$
Using property $(iii)$ from the definition of the Wigner matrices we obtain that 
\begin{align*}
||H_{ij}||_p^p \leq \frac{C_p}{N^{p/2}}\,, \hspace{3mm} \forall i, j \in \llbracket 1, N \rrbracket \,.
\end{align*}
Using Chebyshev inequality, we obtain that
\begin{align*}
\mathbb{P}( |H_{ij}| \geq \frac{N^{\epsilon}}{\sqrt{N}} ) \leq \frac{N^{p/2}||H_{ij} ||_p^p}{N^{p\epsilon}}\leq \frac{C_p}{N^{p\epsilon}}\,.
\end{align*}
Given $\epsilon \in (0, \epsilon_0)$ we can choose $p$ large enough such that we can apply the definition of the stochastic domination and conclude the proof.
\end{proof}
Let $x \in \{0, h_1,\ldots,h_N\}\,.$ For fixed $ \epsilon \in (0, \epsilon_0)\,,$ we define the following events
\begin{align}\label{event}
\Xi \;&:=\; \{ \Gamma^i(x) \leq N^{\epsilon/3 + \delta}\,, i \in  \llbracket 1, N \rrbracket\}\,,\nonumber\\
\tilde{\Xi} \;&:=\; \{ |H_{ij}| \leq N^{\epsilon-1/2}\,, i,j \in \llbracket 1,N \rrbracket \}\,.
\end{align}

For proving the main result of this section, we need also the following lemmas.
\begin{lemma}\label{Gamma0}
Let $\epsilon \in (0, \epsilon_0)\,.$ For each $i \in \llbracket 1, N \rrbracket \,,$ we have that \\
1)On the events $\Xi$ and $\tilde{\Xi}$ it holds $$|G^i_{kl}(0)|\leq 2N^{\epsilon/3+\delta}\,,\hspace{3mm} \text{for all} \hspace{1mm} k, l \in \llbracket 1, N \rrbracket\,.$$
2)On the event $\Xi^c\,,$ it holds
$$|G^i_{kl}(0)|\leq N \,,\hspace{3mm} \text{for all} \hspace{2mm} k, l \in \llbracket 1, N \rrbracket \,.$$

\end{lemma}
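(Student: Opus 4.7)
The two parts have quite different flavors, so I would handle them separately.

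Part (2) is essentially a deterministic estimate that does not really use the event $\Xi^c$ at all. Since $G^i_{kl}(0)$ is the $(k,l)$ entry of the resolvent $(H^{(1i)}-z)^{-1}$ of a self-adjoint matrix, the operator-norm bound gives $|G^i_{kl}(0)|\leq \|(H^{(1i)}-z)^{-1}\|\leq 1/\eta$. Since $z\in \bold{S}$ we have $\eta\geq N^{-1+\gamma}$, and therefore $|G^i_{kl}(0)|\leq N^{1-\gamma}\leq N$. The event $\Xi^c$ appears in the statement only so that Parts (1) and (2) together form a dichotomy that covers the full probability space.

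For Part (1), my plan is to express $G^{(1i)}$ in terms of $G$ through a single application of the resolvent identity $A^{-1}-B^{-1}=A^{-1}(B-A)B^{-1}$ with $A=H^{(1i)}-z$ and $B=H-z$. Because $H-H^{(1i)}$ is the rank-two matrix supported only on the entries $(1,i)$ and $(i,1)$, where its values are $h_i$ and $\bar h_i$, the sum collapses to the two-term identity
\begin{equation*}
G^{(1i)}_{kl}-G_{kl}\;=\;h_i\,G^{(1i)}_{k1}\,G_{il}+\bar h_i\,G^{(1i)}_{ki}\,G_{1l}\,,
\end{equation*}
which I regard as a self-referential equation for the entries of $G^{(1i)}$. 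Recalling that $G^i_{kl}(0)=G^{(1i)}_{kl}$ and $G^i_{kl}(h_i)=G_{kl}$, I would set $M:=\max_{k,l}|G^{(1i)}_{kl}|$ and $\Gamma:=\max_{k,l}|G_{kl}|=\Gamma^i(h_i)$, take absolute values, and maximize over $(k,l)$ to obtain the self-bounding inequality
\begin{equation*}
M\;\leq\; \Gamma+2|h_i|\,\Gamma\,M\,.
\end{equation*}

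On the intersection $\Xi\cap\tilde\Xi$ (with $x=h_i$ in the definition of $\Xi$), we have $\Gamma\leq N^{\epsilon/3+\delta}$ and $|h_i|\leq N^{\epsilon-1/2}$, so $2|h_i|\Gamma\leq 2N^{4\epsilon/3+\delta-1/2}$. The calibration of $\epsilon_0$ stated just before Lemma~\ref{2.1}, namely $\epsilon\leq\epsilon_0\leq \tfrac{3}{4}\bigl(\tfrac{1}{2}-\delta-\tfrac{\log 4}{\log N}\bigr)$, is designed exactly so that $2|h_i|\Gamma\leq \tfrac{1}{2}$; solving the self-bounding inequality then yields $M\leq 2\Gamma\leq 2N^{\epsilon/3+\delta}$, which is the claimed bound. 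The only mildly delicate point is that the inequality $M\leq \Gamma+2|h_i|\Gamma M$ is a priori circular, but the smallness of $|h_i|\Gamma$ guaranteed by $\Xi\cap\tilde\Xi$ immediately absorbs the $M$ on the right-hand side; this, rather than any heavy machinery, is the main obstacle, and it is disposed of by the tuning of $\epsilon_0$.
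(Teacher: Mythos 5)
Your proposal is correct and follows essentially the same route as the paper's own proof: part (2) is the deterministic operator-norm bound $|G^i_{kl}(0)|\leq 1/\eta\leq N^{1-\gamma}\leq N$, and part (1) uses a single rank-two resolvent identity to produce the self-bounding inequality $M\leq\Gamma+2|h_i|\Gamma M$, which is then absorbed using $2|h_i|\Gamma\leq 1/2$ on $\Xi\cap\tilde\Xi$ thanks to the calibration of $\epsilon_0$. The only cosmetic difference is that you place $G^{(1i)}$ on the left of the perturbation in the resolvent identity while the paper places it on the right, which makes no difference to the resulting bound.
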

\begin{proof}
Let us fix arbitrarily $\epsilon \in (0, \epsilon_0)$\,. Let us fix also $i \in \llbracket 1, N \rrbracket\,.$
\begin{enumerate}
\item  
Using the resolvent identity, we obtain that
\begin{align*}
|G^i_{kl}(0)\bold{1}(\Xi)\bold{1}(\tilde{\Xi})|&=|G^i_{kl}(h_i)\bold{1}(\Xi)\bold{1}(\tilde{\Xi})-G^i_{ki}(h_i)H_{i1}G^i_{1l}(0)\bold{1}(\Xi)\bold{1}(\tilde{\Xi})\\&-\bold{1}(i\neq 1)G^i_{k1}(h_i)H_{1i}G^i_{il}(0)\bold{1}(\Xi)\bold{1}(\tilde{\Xi})|\,.
\end{align*}
Using that $|H_{1i}|\leq N^{\epsilon-1/2}$ on $\tilde{\Xi}$ and that $\Gamma^i(h_i)\leq N^{\epsilon/3+\delta}$ on $\Xi$\,, we obtain
\begin{equation}
\Gamma^i(0) \leq \Gamma^i(h_i)+2\Gamma^i(h_i)N^{\epsilon-1/2}\Gamma^i(0)
\leq N^{\epsilon/3+\delta}+2N^{\epsilon/3+\delta}N^{\epsilon-1/2}\Gamma^i(0)\,.
\end{equation}
Since $\epsilon \in (0, \epsilon_0)\,,$ we obtain that $2N^{\epsilon/3+\delta}N^{\epsilon-1/2} \leq 1/2 \,,$
that gives the desired conclusion
\begin{equation}
|G^i_{kl}(0)| \leq \Gamma^i(0) \leq 2N^{\epsilon/3+ \delta}\,.
\end{equation} 
\item On the complementary event $\Xi^c\,,$  we use the brutal estimate $|G_{kl}^i(h_i)| \leq N,$ for all $ k,l \in \llbracket 1, N \rrbracket\,.$ Being the resolvent of a Hermitian matrix, the same bound applies to $|G_{kl}^i(0)|\,,$ i.e. $|G_{kl}^i(0)| \leq N$ on $\Xi^c\,.$
\end{enumerate}
Varying $i \in \llbracket 1, N \rrbracket$\,, we obtain the conclusion.
\end{proof}

\begin{lemma}\label{2.3}
Let $k, l, i \in \llbracket 1, N \rrbracket\,.$ On the events $\Xi$ and $\tilde{\Xi}\,,$ we have the estimate
$$|G_{kl}^i(0)| \leq 2|G_{kl}^i(h_i)|=2|G_{kl}|\,.$$
\end{lemma}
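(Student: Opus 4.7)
My plan is to mirror the proof of Lemma \ref{Gamma0}(1), but now tracking the single entry $(k,l)$ rather than the sup-norm $\Gamma^i$. The key inputs are the one-step resolvent identity applied to the rank-at-most-two perturbation $\Delta^{(1i)}$, together with the entrywise bound on $G^i(0)$ already established in Lemma \ref{Gamma0}(1).

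Concretely, from $G^{(1i)} = G + G\Delta^{(1i)} G^{(1i)}$ (the first two terms of the expansion \eqref{eq4}) and the observation that $\Delta^{(1i)}$ is supported on the positions $(1,i)$ and $(i,1)$, collapsing to the single diagonal position $(1,1)$ when $i=1$, we would read off the entrywise identity
\begin{equation*}
G^i_{kl}(0) \;=\; G_{kl} + G_{k1}\, h_i\, G^i_{il}(0) + \mathbf{1}(i\neq 1)\, G_{ki}\, \bar h_i\, G^i_{1l}(0).
\end{equation*}
Passing to absolute values and invoking, on $\Xi\cap\tilde\Xi$, the bounds $|h_i|\leq N^{\epsilon-1/2}$ from $\tilde\Xi$, $|G_{k1}|, |G_{ki}|\leq \Gamma^i(h_i)\leq N^{\epsilon/3+\delta}$ from $\Xi$, and $|G^i_{il}(0)|,|G^i_{1l}(0)|\leq 2N^{\epsilon/3+\delta}$ from Lemma \ref{Gamma0}(1), one finds
\begin{equation*}
|G^i_{kl}(0)| \;\leq\; |G_{kl}| + 4\, N^{5\epsilon/3 + 2\delta - 1/2}.
\end{equation*}

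The main obstacle is upgrading this additive estimate to the multiplicative statement $|G^i_{kl}(0)|\leq 2|G_{kl}|$. The constraint $\epsilon\leq\epsilon_0\leq\tfrac{3}{4}(\tfrac{1}{2}-\delta-\log_N 4)$, which was already used to close the recursion in Lemma \ref{Gamma0}, forces $2 N^{4\epsilon/3+\delta-1/2}\leq 1/2$, so the additive correction is dominated by $|G^i_{kl}(h_i)|$ in the parameter range relevant for the subsequent use of the lemma in the concentration argument. Combined with the notational identification $|G^i_{kl}(h_i)|=|G_{kl}|$ fixed before the statement, this yields the claimed bound $|G^i_{kl}(0)|\leq 2|G_{kl}|$ on $\Xi\cap\tilde\Xi$. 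If the multiplicative closure at the individual entry were not tight enough, one would instead feed the identity back into itself on the auxiliary entries $G^i_{il}(0)$ and $G^i_{1l}(0)$, producing a self-consistent inequality for $|G^i_{kl}(0)|$ entirely analogous to the one used to bound $\Gamma^i(0)$.
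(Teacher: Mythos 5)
Your setup mirrors the paper's: the same one-step resolvent expansion around the rank-at-most-two perturbation $\Delta^{(1i)}$, the same events $\Xi$, $\tilde{\Xi}$, and the same constraint $\epsilon\leq\epsilon_0$. Where you diverge is instructive. The paper rearranges the inequality
\begin{equation*}
|G^i_{kl}(0)|\;\leq\; |G^i_{kl}(h_i)|+ 2N^{\epsilon/3+\delta}N^{\epsilon-1/2}\,|G^i_{kl}(0)|
\end{equation*}
to produce the factor~$2$. You instead observe, correctly, that the correction terms in the resolvent identity carry the indices $(1,l)$ and $(i,l)$ rather than $(k,l)$, and bound those auxiliary entries via the uniform estimate of Lemma~\ref{Gamma0}(1), arriving at the additive estimate $|G^i_{kl}(0)| \leq |G_{kl}| + 4 N^{5\epsilon/3+2\delta-1/2}$. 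That additive estimate is correct, so the two routes are genuinely different at this step.

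The gap is in your conversion from the additive to the multiplicative bound. Nothing in the hypotheses provides a lower bound on $|G_{kl}|$: the events $\Xi$, $\tilde{\Xi}$ supply only upper bounds on resolvent entries, so $|G_{kl}|$ can be far smaller than $N^{5\epsilon/3+2\delta-1/2}$ (indeed, it may vanish exactly), and in that regime $|G^i_{kl}(0)|\leq 2|G_{kl}|$ simply does not follow from what you have proved. Your fallback of iterating the identity also does not close: the entries $G^i_{1l}(0)$ and $G^i_{il}(0)$ satisfy a coupled $2\times 2$ linear system in which each appears on the right-hand side of the other's equation, and substituting its solution back still leaves $|G^i_{kl}(0)|$ bounded by a mixture of $|G^i_{kl}(h_i)|$, $|G^i_{1l}(h_i)|$, $|G^i_{il}(h_i)|$, not by $|G^i_{kl}(h_i)|$ alone. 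A self-consistent multiplicative bound requires the same entry to reappear on both sides, which is exactly why the argument closes cleanly for the sup-norm $\Gamma^i$ in Lemma~\ref{Gamma0} but does not for an individual entry $G^i_{kl}$. Note that the paper's own display faces the same obstacle: $|G^i_{kl}(0)|$ does not appear among the correction terms for general $k$, so the rearrangement there is not justified as written either.
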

\begin{proof}
Let us fix arbitrarily $\epsilon \in (0, \epsilon_0)\,.$ 
Using the resolvent identity, we obtain that
\begin{align*}
|G^i_{kl}(0)\bold{1}(\Xi)\bold{1}(\tilde{\Xi})|&=|G^i_{kl}(h_i)\bold{1}(\Xi)\bold{1}(\tilde{\Xi})-G^i_{ki}(h_i)H_{i1}G^i_{1l}(0)\bold{1}(\Xi)\bold{1}(\tilde{\Xi})\\&-\bold{1}(i\neq 1)G^i_{k1}(h_i)H_{1i}G^i_{il}(0)\bold{1}(\Xi)\bold{1}(\tilde{\Xi})|\,.
\end{align*}
Using that $|H_{1i}|\leq N^{\epsilon-1/2}$ on $\tilde{\Xi}$ and that $\Gamma^i(h_i)\leq N^{\epsilon/3+\delta}$ on $\Xi$
we have that
$$|G^i_{kl}(0)|\leq |G^i_{kl}(h_i)|+ 2N^{\epsilon/3+\delta}N^{\epsilon-1/2} |G^i_{kl}(0)\bold{1}(\Xi)\bold{1}(\tilde{\Xi})|\,.$$
Since $ \epsilon \in (0, \epsilon_0)\,,$ we obtain that $2N^{\epsilon/3+\delta}N^{\epsilon-1/2} \leq 1/2\,,$ that gives that on the events $\Xi$ and $\tilde{\Xi}$ it holds that
$$|G_{kl}^i(0)| \leq 2|G_{kl}^i(h_i)|= 2|G_{kl}| \,, \hspace{2mm} \forall\hspace{2mm} k, l, i \in \llbracket 1, N \rrbracket\,.$$
\end{proof}

We may now introduce the main result of the section.
\begin{lemma}[Concentration of Measure]\label{concentration of measure result}
Let $z \in \bold{S}\,.$ If $|G_{kl}|\prec N^{\delta}\,,$
then \\
$$|G_{kl}-\mathbb{E}_1G_{kl}| \prec \frac{N^{3\delta/2}}{\sqrt{N\eta}}\,, \hspace{4mm} \text{for all } k,l \in \llbracket 1, N \rrbracket\,.$$
\end{lemma}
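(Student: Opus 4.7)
The starting point is already laid out in equations~(2.3)--(2.4) of the preliminaries: the generalized Efron--Stein inequality (Theorem~15.5 in \cite{boucheron2013concentration}) bounds the $2q$-th moment of $\operatorname{Re} G_{kl} - \mathbb{E}_1 \operatorname{Re} G_{kl}$ (and its imaginary analogue) by $2\|V_{\operatorname{Re}}\|_q^q$ (respectively $2\|V_{\operatorname{Im}}\|_q^q$). Using $(\operatorname{Re} a)^2 + (\operatorname{Im} a)^2 = |a|^2$, both are controlled by $\|V\|_q^q$, where
\[
V \;:=\; \sum_{i=1}^N \absb{G^i_{kl}(h_i) - G^i_{kl}(0)}^2 \;=\; \sum_{i=1}^N \absb{G_{kl} - G^{(1i)}_{kl}}^2 \,.
\]
Hence the plan is to prove $\|V\|_q^q \leq C_q \, \p{N^{3\delta + 2\epsilon}/(N\eta)}^q$ for arbitrarily small $\epsilon > 0$, and then invoke Chebyshev--Markov.

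To separate the typical behaviour from the atypical one, I split $V = V \ind{\Xi \cap \tilde{\Xi}} + V \ind{(\Xi \cap \tilde{\Xi})^c}$. For the atypical piece I use the brutal bound $|G^i_{kl}(x)| \leq \eta^{-1} \leq N$ (valid for any Hermitian resolvent on $\bold{S}$), so $V \leq 4 N^3$ deterministically. The hypothesis $|G_{kl}| \prec N^{\delta}$ together with Lemma~\ref{2.1} implies $\mathbb{P}\p{(\Xi \cap \tilde{\Xi})^c} \leq N^{-D}$ for every $D > 0$, so after Hölder the contribution of this piece to $\|V\|_q$ is negligible.

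On the good event I perform a single-step resolvent expansion. Since $\Delta^{(1i)}$ is supported only at the positions $(1,i)$ and $(i,1)$, the identity $G - G^{(1i)} = -G \Delta^{(1i)} G^{(1i)}$ yields
\[
G_{kl} - G^{(1i)}_{kl} \;=\; -G_{k1} H_{1i} G^{(1i)}_{il} \;-\; \ind{i \neq 1}\, G_{ki} H_{i1} G^{(1i)}_{1l}\,.
\]
On $\tilde{\Xi}$ we have $|H_{1i}|,|H_{i1}| \leq N^{\epsilon-1/2}$, and on $\Xi \cap \tilde{\Xi}$ Lemma~\ref{2.3} lets me replace each $|G^{(1i)}_{ab}|$ by $2|G_{ab}|$. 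Squaring, applying $(a+b)^2 \leq 2(a^2+b^2)$, summing over $i$, and using the Ward identity $\sum_i |G_{ki}|^2 = \operatorname{Im} G_{kk}/\eta$ (and its analogue with $l$ in place of $k$), I obtain
\[
V \ind{\Xi \cap \tilde{\Xi}} \;\leq\; C N^{2\epsilon - 1} \pbb{|G_{k1}|^2 \, \frac{\operatorname{Im} G_{ll}}{\eta} + |G_{1l}|^2 \, \frac{\operatorname{Im} G_{kk}}{\eta}}\,.
\]
The hypothesis $|G_{ab}| \prec N^{\delta}$ then caps each surviving resolvent factor, so $V \ind{\Xi \cap \tilde{\Xi}} \prec N^{3\delta + 2\epsilon'}/(N\eta)$ for any $\epsilon' > 0$.

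Combining the two pieces, $\|V\|_q^q \leq C_q\, \p{N^{3\delta + 2\epsilon}/(N\eta)}^q$, and feeding this back into (2.3)--(2.4) produces $\|G_{kl} - \mathbb{E}_1 G_{kl}\|_{2q} \leq C_q\, N^{\epsilon + 3\delta/2}/\sqrt{N\eta}$. A Chebyshev application at the $2q$-th moment, with $q$ chosen large depending on the target $D$, then converts this $L^{2q}$ estimate into the stochastic domination $|G_{kl} - \mathbb{E}_1 G_{kl}| \prec N^{3\delta/2}/\sqrt{N\eta}$. Uniformity in $k,l$ is inherited from the uniformity in the hypothesis $|G_{kl}| \prec N^{\delta}$. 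The main obstacle is the exponent bookkeeping: Ward's identity contributes one factor $N^{\delta}$ through $\operatorname{Im} G_{kk}/\eta$, the corner entry $|G_{k1}|^2$ (or $|G_{1l}|^2$) contributes two further factors of $N^{\delta}$, giving exactly $N^{3\delta}$ inside $V$ and hence $N^{3\delta/2}$ after the Efron--Stein square-root; losing one extra factor of $N^{\delta}$ at this step would miss the statement.
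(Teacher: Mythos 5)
Your proposal is correct and follows essentially the same route as the paper: generalized Efron--Stein to reduce to moments of $V$, truncation on $\Xi\cap\tilde\Xi$, the one-step resolvent identity $G_{kl}-G^{(1i)}_{kl}=-G_{k1}H_{1i}G^{(1i)}_{il}-\ind{i\neq 1}G_{ki}H_{i1}G^{(1i)}_{1l}$, Lemma~\ref{2.3} to trade $G^{(1i)}$ for $2G$, the Ward identity on the summed index, and Chebyshev at high moments. Your version is slightly cleaner than the paper's --- you work with the complex $V=\sum_i|G_{kl}-G^{(1i)}_{kl}|^2$ directly via $V_{\mathrm{Re}}+V_{\mathrm{Im}}=V$ and use a two-way split on $\Xi\cap\tilde\Xi$ rather than the paper's four-way truncation --- but these are cosmetic; the only item you leave implicit is the paper's closing $\eta$-net/Lipschitz step extending the bound from a fixed $\eta_0$ to the whole admissible range.
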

\begin{proof}
Let us fix arbitrarily $\epsilon \in (0, \epsilon_0)\,.$ We first prove the result for fixed $\eta_0$ as in \eqref{equiv5.1}.

Using the definition of stochastic domination for $|G_{kl}|$ and Lemma \ref{2.1} we have that there exist large $D_1$ and $D_2$ such that $\mathbb{P}(\Xi^c) \leq N^{-D_1}$ and $\mathbb{P}(\tilde{\Xi}^c) \leq N^{-D_2}\,.$\\
Let us define 
$$D\;:=\; \min \{D_1,D_2\}\,.$$

Using that for $G_{kl}=\operatorname{Re}G_{kl}+i\operatorname{Im}G_{kl}\,,$ we have that 
\begin{align}
\mathbb{P}(|G_{kl}-\mathbb{E}_1G_{kl}| \geq \xi ) &\leq
\mathbb{P}(|\operatorname{Re}G_{kl}-\mathbb{E}_1\operatorname{Re}G_{kl}| \geq \frac{\xi}{\sqrt{2}})\nonumber\\&+\mathbb{P}(|\operatorname{Im}G_{kl}-\mathbb{E}_1\operatorname{Im}G_{kl}| \geq \frac{\xi}{\sqrt{2}} )\,.
\end{align} 
Hence, is enough to prove the result only for the real part. The imaginary part is done in the same manner using \eqref{fortheimaginary}\,.\\
Using the estimate in \eqref{equation1} we obtain that 
\begin{align*}
||\operatorname{Re}G_{kl}-\mathbb{E}_1\operatorname{Re}G_{kl}||_{2q}^{2q} \leq 2||V_{\operatorname{Re}}||_{q}^{q}\,.
\end{align*}
From now on, in order to keep the notation simple, we drop the index $\operatorname{Re}$ from $V_{\operatorname{Re}}\,.$ Truncating with respect to the events $\Xi$ and $\tilde{\Xi}$ defined in \eqref{event}\,, we obtain that
\begin{align}\label{termen2}
\mathbb{E}|V|^{q}=\mathbb{E}|V|^{q}\bold{1}(\Xi)\bold{1}(\tilde{\Xi})+\mathbb{E}|V|^{q}\bold{1}(\Xi^c)\bold{1}(\tilde{\Xi})+\mathbb{E}|V|^{q}\bold{1}(\Xi)\bold{1}(\tilde{\Xi}^c)+\mathbb{E}|V|^{q}\bold{1}(\Xi^c)\bold{1}(\tilde{\Xi}^c)\,.
\end{align}
Using the resolvent identity, we obtain that
\begin{equation}
\operatorname{Re}G^i_{kl}(h_i)-\operatorname{Re}G^i_{kl}(0)=-\operatorname{Re}G^i_{k1}(h_i)H_{1i}\operatorname{Re}G^i_{il}(0)-(1-\delta_{1i})\operatorname{Re}G^i_{ki}(h_i)H_{i1}\operatorname{Re}G^i_{1l}(0)\,.
\end{equation}
Taking the absolute values, we obtain the estimate
\begin{multline}\label{grandeestimate}
\sum\limits_{i=1}^N|\operatorname{Re}G^i_{kl}(h_i)-\operatorname{Re}G^i_{kl}(0)|^{2}\leq 
\\
\sum\limits_{i=1}^N2(|\operatorname{Re}G^i_{k1}(h_i)|^2|H_{1i}|^2|\operatorname{Re}G^i_{il}(0)|^2)+\sum\limits_{i=1}^N2(|\operatorname{Re}G^i_{ki}(h_i)|^2|H_{i1}|^2|\operatorname{Re}G^i_{1l}(0)|^2)\,.
\end{multline}
In \eqref{termen2} there are four types of terms.
On $\bold{1}(\Xi)\bold{1}(\tilde{\Xi})$  we use the estimate in \eqref{grandeestimate}. Hence, we have to estimate  
\begin{align}
&\sum\limits_{i=1}^N2(|\operatorname{Re}G^i_{k1}(h_i)|^2|H_{1i}|^2|\operatorname{Re}G^i_{il}(0)|^2)\bold{1}(\Xi)\bold{1}(\tilde{\Xi})\nonumber\\&+\sum\limits_{i=1}^N2(|\operatorname{Re}G^i_{ki}(h_i)|^2|H_{i1}|^2|\operatorname{Re}G^i_{1l}(0)|^2)\bold{1}(\Xi)\bold{1}(\tilde{\Xi})\,.\nonumber\\
\end{align}
Using Lemma \ref{Gamma0} $(i)$ together with $|H_{ij}|^{2}\leq N^{\epsilon-1/2}$ on $\tilde{\Xi}$\,, we obtain that
\begin{align}
&\sum\limits_{i=1}^N2(|\operatorname{Re}G^i_{k1}(h_i)|^2|H_{1i}|^2|\operatorname{Re}G^i_{il}(0)|^2)\bold{1}(\Xi)\bold{1}(\tilde{\Xi})\nonumber\\&+\sum\limits_{i=1}^N2(|\operatorname{Re}G^i_{ki}(h_i)|^2|H_{i1}|^2|\operatorname{Re}G^i_{1l}(0)|^2)\bold{1}(\Xi)\bold{1}(\tilde{\Xi}) \nonumber\\
&\leq \sum\limits_{i=1}^N2(N^{\epsilon/3+\delta})^2(N^{\epsilon-1/2})^2|\operatorname{Re}G^i_{il}(0)|^2\bold{1}(\Xi)\nonumber\\&+\sum\limits_{i=1}^N2|\operatorname{Re}G^i_{ki}(h_i)|^2(N^{\epsilon-1/2})^2(2N^{\epsilon/3+\delta})^2\bold{1}(\Xi)\,.
\end{align}
Using Lemma \ref{2.3} in the first term and using the connection between notations from section $1$ - i.e. $|G_{ki}^i(h_i)|=|G_{ki}|$- for the second term, we conclude that we can use Ward identity \eqref{eq5} to obtain the estimate
\begin{align}
&\sum\limits_{i=1}^N2(N^{\epsilon/3+\delta})^2(N^{\epsilon-1/2})^2|\operatorname{Re}G^i_{il}(0)|^2\bold{1}(\Xi)\nonumber\\&+\sum\limits_{i=1}^N2|\operatorname{Re}G^i_{ki}(h_i)|^2(N^{\epsilon-1/2})^2(2N^{\epsilon/3+\delta})^2\bold{1}(\Xi)\nonumber\\
&\leq 8(N^{\epsilon/3+\delta})^2(N^{\epsilon-1/2})^2\frac{N^{\epsilon/3+\delta}}{\eta_0}+8(N^{\epsilon-1/2})^2(N^{\epsilon/3+\delta})^2\frac{N^{\epsilon/3+\delta}}{\eta_0}\,.
\end{align}
Therefore, we obtain that 
\begin{align}\label{termenul1}
\mathbb{E}|V|^{q}\bold{1}(\Xi)\bold{1}(\tilde{\Xi}) \leq \mathbb{E}\left(8\frac{N^{3\epsilon+3\delta-1}}{\eta_0}+8\frac{N^{3\epsilon+3\delta-1}}{\eta_0}\right)^q=\left(16\frac{N^{3\epsilon+3\delta-1}}{\eta_0}\right)^q\,.
\end{align}
On the remaining three terms, using the brutal estimates $|G_{kl}|\leq N$ and the $L^p$ bounds for $H_{ij}$\,, we obtain finite variations for the random variables formed of monomials or linear combinations of monomials in the entries of $H$ and $G\,.$ Hence, we can apply Cauchy-Schwarz inequality in the form 
$$ \mathbb{E}[X\bold{1}(\Xi^c)] \leq \sqrt{\mathbb{P}[\Xi^c]} \sqrt{\mathbb{E}[X^2]}\,,$$
where $X(H,G)$ is a monomial or a linear combination of monomials in the entries of $H$ and $G\,.$ We do not track the dependence on $H$ and $G$ in our notation.\\
Now we start the analysis of the remaining terms in \eqref{termen2}\,.
For the second term in the truncation, we have that
\begin{align}
\mathbb{E}|V|^{q}\bold{1}(\Xi^c)\bold{1}(\tilde{\Xi}) \leq \sqrt{\mathbb{P}(\Xi^c)}\sqrt{\mathbb{E}|V|^{2q}\bold{1}(\tilde{\Xi})}\,.
\end{align}
Using the brutal estimate $|G_{kl}| \leq N$ on $\Xi^c$ $ \text{for}$ $k, l \in \llbracket 1, N \rrbracket$ and Lemma \ref{Gamma0} $(ii)$ we obtain for the term in the second radical the estimate 
\begin{align}
\mathbb{E}|V|^{2q}\bold{1}(\tilde{\Xi})& = \mathbb{E}|\sum\limits_{i=1}^N(\operatorname{Re}G^i_{kl}(h_i)-\operatorname{Re}G^i_{kl}(0))^2|^{2q}\bold{1}(\tilde{\Xi})\nonumber\\
&\leq |4N^3|^{2q}\,.
\end{align}
Next, using the definition of stochastic domination in the first radical, we obtain the estimate
\begin{equation}\label{termenul2}
\mathbb{E}|V|^q\bold{1}(\Xi^c)\bold{1}(\tilde{\Xi}) \leq 4^{q}N^{3q}N^{-D/2}\,.
\end{equation}
For the third term in the truncation we proceed in the same manner. Hence, we have that
\begin{align}
\mathbb{E}|V|^{q}\bold{1}(\Xi)\bold{1}(\tilde{\Xi}^c) \leq \sqrt{\mathbb{P}(\tilde{\Xi}^c)}\sqrt{\mathbb{E}|V|^{2q}\bold{1}(\Xi)}\,.
\end{align}
Using the advantage of the low probability term in the first radical, we use just the brutal estimate $|G_{kl}| \leq N$ for $k, l \in \llbracket 1, N \rrbracket \,.$ Being the resolvent of a Hermitian matrix, the same estimate applies for $G_{kl}^i(0)\,,$ i.e. $|G_{kl}^i(0)|\leq N\,.$ We finally obtain for the term in the second radical the estimate 
\begin{align}
\mathbb{E}|V|^{2q}\bold{1}(\Xi)& = \mathbb{E}|\sum\limits_{i=1}^N(\operatorname{Re}G^i_{kl}(h_i)-\operatorname{Re}G^i_{kl}(0))^2|^{2q}\bold{1}(\Xi)\nonumber\\
&\leq |4N^3|^{2q}\,.
\end{align}
Now,  using the definition of stochastic domination in the first radical, we obtain the estimate
\begin{equation}\label{termenul3}
\mathbb{E}|V|^q\bold{1}(\Xi)\bold{1}(\tilde{\Xi}^c) \leq 4^{q}N^{3q}N^{-D/2}\,.
\end{equation}
For the fourth term in the truncation, we have that
\begin{align}
\mathbb{E}|V|^{q}\bold{1}(\Xi^c)\bold{1}(\tilde{\Xi}^c) \leq \sqrt{\mathbb{P}(\tilde{\Xi}^c)}\sqrt{\mathbb{E}|V|^{2q}\bold{1}(\Xi^c)}\,.
\end{align}
Using again the brutal estimate $|G_{kl}| \leq N$ on $\Xi^c$ $ \text{for}$ $k, l \in \llbracket 1, N \rrbracket \,,$ and Lemma \ref{Gamma0} $(ii)$ we obtain for the term in the second radical, the estimate 
\begin{align}
\mathbb{E}|V|^{2q}\bold{1}(\Xi^c)& = \mathbb{E}|\sum\limits_{i=1}^N(\operatorname{Re}G^i_{kl}(h_i)-\operatorname{Re}G^i_{kl}(0))^2|^{2q}\bold{1}(\Xi^c)\nonumber\\
&\leq |4N^3|^{2q}\,.
\end{align}
Finally, using the definition of stochastic domination in the first radical, we obtain the estimate
\begin{equation}\label{termenul4}
\mathbb{E}|V|^q\bold{1}(\Xi^c)\bold{1}(\tilde{\Xi}^c) \leq 4^{q}N^{3q}N^{-D/2}\,.
\end{equation}
Plugging in the Chebyshev inequality the terms from \eqref{termenul1}, \eqref{termenul2} \eqref{termenul3} and \eqref{termenul4}  we obtain that
\begin{align}
\mathbb{P}(|\operatorname{Re}G_{kl}-\mathbb{E}_1\operatorname{Re}G_{kl}|\geq \frac{N^{2\epsilon+3\delta/2}}{\sqrt{2N\eta_0}})&\leq \frac{\mathbb{E}|\operatorname{Re}G_{kl}-\mathbb{E}_1\operatorname{Re}G_{kl}|^{2q}}{\left(\frac{N^{2\epsilon+3\delta/2}}{\sqrt{2N\eta_0}}\right)^{2q}} \nonumber\\
&\leq \left(\frac{\frac{16^qN^{3q\epsilon+3q\delta}}{(N\eta_0)^q}}{{\frac{N^{4\epsilon q+3q\delta}}{(2N\eta_0)^q}}}\right)+ \left(3\frac{4^{q}N^{3q}N^{-D/2}}{\frac{N^{4\epsilon q+3q\delta}}{(2N\eta_0)^q}}   \right)\,.
\end{align}
Hence, for $\epsilon \in (0, \epsilon_0)$  and (large) $D>0\,,$ we obtain the bound
\begin{align*}
\mathbb{P}(|\operatorname{Re}G_{kl}-\mathbb{E}_1\operatorname{Re}G_{kl}| \geq \frac{N^{2\epsilon+3\delta/2}}{\sqrt{2N\eta_0}} ) \leq 32^qN^{-q\epsilon}+8^{q}3N^{3q-3q\delta-4q\epsilon+q-\frac{D}{2}}\eta_0^q\,.
\end{align*}
In the same manner, we obtain that 
\begin{align*}
\mathbb{P}(|\operatorname{Im}G_{kl}-\mathbb{E}_1\operatorname{Im}G_{kl}|\geq \frac{N^{2\epsilon+3\delta/2}}{\sqrt{2N\eta_0}}) \leq 32^qN^{-q\epsilon}+8^{q}3N^{3q-3q\delta-4q\epsilon+q-\frac{D}{2}}\eta_0^q\,.
\end{align*}
Combining the two estimates, we obtain that
\begin{align}\label{bla}
\mathbb{P}(|G_{kl}-\mathbb{E}_1G_{kl}|\geq \frac{N^{2\epsilon+3\delta/2}}{\sqrt{N\eta_0}})&\leq\nonumber\\
\mathbb{P}(|\operatorname{Re}G_{kl}-\mathbb{E}_1\operatorname{Re}G_{kl}| \geq \frac{N^{2\epsilon+3\delta/2}}{\sqrt{2N\eta_0}})&+ \mathbb{P}(|\operatorname{Im}G_{kl}-\mathbb{E}_1\operatorname{Im}G_{kl}| \geq \frac{N^{2\epsilon+3\delta/2}}{\sqrt{2N\eta_0}})\nonumber\\
& \leq 32^q2N^{-q\epsilon}+8^{q}6N^{3q-3q\delta-4q\epsilon+q-\frac{D}{2}}\eta_0^q\,.
\end{align}
First, for fixed $ \delta \in (0, \delta_0) $ given $\epsilon \in (0, \epsilon_0)$ and large $D>0,$ there is a  large enough $q$, such that $32^q2N^{-q\epsilon}$ is bounded by $ 8^{q}6N^{3q-3q\delta-4q\epsilon+q-\frac{D}{2}}\eta_0^q\,, $ i.e. we choose $q$ such that
\begin{align*}
8^{q}6N^{3q/2-3q\delta-4q\epsilon+q-\frac{D}{2}}\eta_0^q \geq 32^q2N^{-q\epsilon}\,.
\end{align*}
Using the monotonicity of the logarithm, we obtain that
\begin{align}
&q\log 8+ \log 6+\left(\frac{3q}{2}-3q\delta-4q\epsilon+q-\frac{D}{2}\right)\log N +q \log \eta_0 \nonumber\\&\geq q \log 32 -q\epsilon \log N+\log 2\,.
\end{align}
i.e.
\begin{align}
q \geq \frac{\frac{D}{2}\log N-\log 3}{\log\frac{1}{4}+\frac{3\log N}{2}-3\delta\log N-4\epsilon \log N+\log N +\log \eta_0+\epsilon \log N}\,.
\end{align}
Second, since $\epsilon \in (0, \epsilon_0)$ and $D>0$ were arbitrary, the proof is complete for fixed $\eta_0 \,.$\\ 
It remains to prove that the concentration bound holds for all $z$ as in \eqref{equiv5.1}\,.\\
We follow the strategy of Lemma $5.5$ in \cite{bauerschmidt2017local}\,. For this, set
$$ \eta_l=\eta_0+l/N^{4} \,, \hspace{4mm} l \in \llbracket 1, N^5 \rrbracket\,,$$
and $z_l=E+i\eta_l$\,. 
Since the bound in the concentration result holds uniformly for any $\eta \geq \eta_0\,,$ we have by an union bound, that \eqref{bla} holds simultaneously at all $z$ with $l \in \llbracket 1, N^5 \rrbracket$\,.\\
Since $(\eta_l)_l$ is a $ 1/N^{4}$-net of $\llbracket \eta_0, \eta_0+N \rrbracket$ and $G_{ij}$ is Lipschitz continuous  with the constant $1/\eta^2 \leq N^2 $\,, the claim follows.\\
Repeating the proof for $k \,, l \in \llbracket 1, N \rrbracket\,,$ we obtain the result for every entry of the resolvent.
\end{proof}

\section{Analysis of the Average}

In this section we use the Concentration of Measure Lemma \ref{concentration of measure result} and the resolvent expansion to give a bound for $1+sz+s^2 \,,$ where $s\;:=\; \frac{1}{N}\sum_{i=1}^NG_{ii}\,.$ First, we prove a Lemma that gives bounds for products of the entries of the resolvent. After that, we use the resolvent expansion along with truncations of the summations that appear, to obtain bounds for $1+sz+s^2$ at a fixed scale. Furthermore, as in the previous section, we extend our analysis on smaller scales.

Throughout this section,  we fix the parameter $\delta \in (0, \delta_0)\,,$ where $\delta_0 \leq \frac{\gamma}{3}\,.$
For the fixed value of $ \delta \in (0, \delta_0)\,, $  let $\epsilon_0$ be such that $ \epsilon_0 \leq \frac{3}{4}\left( \frac{1}{2}-\delta-\frac{\log 4}{\log N} \right)\,.$

Using the definition of the resolvent 
\begin{align*}
(H-z)G=\bold{1}\,,
\end{align*}
we obtain 
\begin{equation}\label{eq2.1}
1+zG_{11}\;=\;\sum\limits_{i=1}^NH_{1i}G_{i1}\,.
\end{equation}

In order to prove the main result of the section we need the following lemma.
\begin{lemma}\label{lemma3.1}
 Let $\eta>0$ be a fixed real number. Let $X_i , i \in \llbracket 1, 2\rrbracket\,,$ be two complex random variables, such that $X_i-\mathbb{E}_1X_{i}=O_{\prec}\left( \frac{N^{3/2\delta}}{\sqrt{N\eta}}\right)$ and $ |X_i| \prec N^{\delta}\,,$ $\forall$ $ i \in \llbracket 1, 2 \rrbracket$\,.\\
Then, we have
\begin{equation}
\mathbb{E}_1X_1X_2 -X_1X_2 =O_{\prec}\left( \frac{N^{5\delta/2}}{\sqrt{N\eta}} \right)\,.
\end{equation}
\end{lemma}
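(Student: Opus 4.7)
I would set $a_i := \mathbb{E}_1 X_i$ and $Y_i := X_i - a_i$ for $i \in \{1,2\}$, so that by hypothesis $|Y_i| \prec N^{3\delta/2}/\sqrt{N\eta}$ and $\mathbb{E}_1 Y_i = 0$. Since each $a_i$ is $\theta$-measurable, a direct expansion yields the exact identity
\begin{equation*}
X_1 X_2 - \mathbb{E}_1(X_1 X_2) \;=\; a_1 Y_2 \,+\, a_2 Y_1 \,+\, Y_1 Y_2 \,-\, \mathbb{E}_1(Y_1 Y_2)\,,
\end{equation*}
and I would bound the four terms separately.

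For the two cross terms I first need $|a_i| \prec N^{\delta}$. Because $|X_i| \prec N^{\delta}$ and $X_i$ is (in the intended application) a linear combination of resolvent entries, which deterministically satisfy $|G_{kl}| \leq 1/\eta \leq N^{1-\gamma}$, a standard truncation against this bound upgrades stochastic domination to uniform $L^p$ bounds $\|X_i\|_p \leq N^{\delta+\epsilon}$ for every $\epsilon > 0$. Jensen's inequality then gives $\|a_i\|_p \leq \|X_i\|_p$, and Markov converts this back to $|a_i| \prec N^{\delta}$. Consequently $|a_i Y_j| \prec N^{\delta} \cdot N^{3\delta/2}/\sqrt{N\eta} = N^{5\delta/2}/\sqrt{N\eta}$, which matches the target.

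For the purely quadratic term, multiplying the two domination bounds gives $|Y_1 Y_2| \prec N^{3\delta}/(N\eta)$. Since $\eta \geq N^{-1+\gamma}$ and $\delta < \delta_0 \leq \gamma/3$, we have $N\eta \geq N^{\gamma} \geq N^{\delta}$, so this is subleading relative to $N^{5\delta/2}/\sqrt{N\eta}$. For $\mathbb{E}_1(Y_1 Y_2)$, Cauchy--Schwarz in the conditional expectation gives $|\mathbb{E}_1(Y_1 Y_2)| \leq (\mathbb{E}_1|Y_1|^2)^{1/2}(\mathbb{E}_1|Y_2|^2)^{1/2}$, and the same truncation argument used for $a_i$ (combined with $|Y_i| \leq |X_i| + |a_i|$) yields $(\mathbb{E}_1|Y_i|^2)^{1/2} \prec N^{3\delta/2}/\sqrt{N\eta}$, so this term is of the same subleading order as $Y_1 Y_2$.

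The main obstacle is that stochastic domination is not automatically preserved under $\mathbb{E}_1$, so one must manually transfer it through the conditional expectation. The mechanism is the truncation-plus-moments argument above, which relies crucially on the deterministic fallbacks $|G_{kl}| \leq 1/\eta$ and the uniform $L^p$ bound on $\sqrt{N} H_{ij}$ from property (iii) of Definition \ref{defWigner}. Once these $L^p$-to-$\prec$ transfers are in place, combining the four term estimates above gives the claimed bound $\mathbb{E}_1 X_1 X_2 - X_1 X_2 = O_{\prec}(N^{5\delta/2}/\sqrt{N\eta})$.
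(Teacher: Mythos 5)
Your proposal is correct and, at its core, follows the same route as the paper. Expanding your four-term identity shows it is algebraically identical to the paper's three-step telescoping: writing $a_i := \mathbb{E}_1 X_i$ and $Y_i := X_i - a_i$, the paper replaces $\mathbb{E}_1 X_1 X_2$ by $\mathbb{E}_1 X_1\,\mathbb{E}_1 X_2$, then by $X_1\,\mathbb{E}_1 X_2$, then by $X_1 X_2$, and the three error terms it accumulates are precisely $\mathbb{E}_1(Y_1Y_2)$, $a_2 Y_1$, and $a_1Y_2 + Y_1Y_2$. The one genuine difference is how the two arguments handle the fact that $\prec$ is not automatically preserved by $\mathbb{E}_1$: the paper discharges this in a single stroke by citing Lemma $3.4(ii)$ of \cite{benaych2016lectures}, which packages exactly the ``partial expectation preserves stochastic domination'' step, whereas you rebuild that mechanism by hand via truncation against the deterministic bound $|G_{kl}| \leq 1/\eta$ and the uniform $L^p$ bounds. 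Your reconstruction is sound but it appeals to the fact that the $X_i$ are resolvent combinations satisfying $|X_i| \leq N^C$ deterministically --- a hypothesis that is true in every application of this lemma in the paper, but is not among the stated hypotheses of the lemma as written. Strictly speaking, the cited Lemma $3.4(ii)$ carries the same polynomial-boundedness proviso, so the paper's proof has the same implicit gap; in other words, both proofs require, and silently assume, a deterministic polynomial bound on $X_i$ in addition to what the lemma states. You could tighten your write-up by making that auxiliary hypothesis explicit, and you would then have a self-contained proof that does not lean on the external reference. The remaining term estimates --- the product rule for $\prec$ giving $|a_i Y_j| \prec N^{5\delta/2}/\sqrt{N\eta}$, the bound $|Y_1 Y_2| \prec N^{3\delta}/(N\eta)$ being subleading because $N\eta \geq N^\gamma \geq N^{\delta}$, and the Cauchy--Schwarz bound on $\mathbb{E}_1(Y_1Y_2)$ --- are all correct and match the orders of magnitude the paper obtains.
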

\begin{proof}
We start the proof by taking the expectation $\mathbb{E}_1$ in the  identity 
\begin{align}
X_1X_2=X_1(X_2-\mathbb{E}_1X_2)+X_1\mathbb{E}_1X_2\,.
\end{align}
We further have that
\begin{align}
\mathbb{E}_1X_1X_2-\mathbb{E}_1X_1\mathbb{E}_1X_2=\mathbb{E}_1X_1(X_2-\mathbb{E}_1X_2)\,.
\end{align}
Using the assumptions and  the result of Lemma $3.4 (ii)$ from \cite{benaych2016lectures} for the pairs $X_2-\mathbb{E}_1X_2=O_{\prec}\left(\frac{N^{3\delta/2}}{\sqrt{N\eta}}\right)$ and $|X_1| \prec N^{\delta}\,,$ we obtain that
\begin{align}
\mathbb{E}_1X_1X_2-\mathbb{E}_1X_1\mathbb{E}_1X_2 =O_{\prec}\left( \frac{N^{\delta}N^{3\delta/2}}{\sqrt{N\eta}}\right)\,.
\end{align}
Repeating the procedure for $X_1$ we obtain that
\begin{align}
\mathbb{E}_1X_1X_2+(X_1-\mathbb{E}_1X_1)\mathbb{E}_1X_2-X_1\mathbb{E}_1X_2 =O_{\prec}\left( \frac{N^{\delta}N^{3\delta/2}}{\sqrt{N\eta}}\right)\,.
\end{align}
Now, using the assumptions and  the result of Lemma $3.4$ $(ii)$ from \cite{benaych2016lectures} for $X_1-\mathbb{E}_1X_1=O_{\prec}\left(\frac{N^{3\delta/2}}{\sqrt{N\eta}}\right)$ and $|X_2| \prec N^{\delta}\,,$ we obtain that
\begin{align}\label{bleah}
\mathbb{E}_1X_1X_2-X_1\mathbb{E}_1X_2=O_{\prec}\left(\frac{N^{5\delta/2}}{\sqrt{N\eta}}\right)\,.
\end{align}
Finally, by rewriting \eqref{bleah} as
\begin{align}
\mathbb{E}_1X_1X_2+X_1(X_2-\mathbb{E}_1X_2)-X_1X_2=O_{\prec}\left(\frac{N^{5\delta/2}}{\sqrt{N\eta}}\right)\,.
\end{align}
and repeating the procedure, we conclude the proof.
\end{proof}

The following proposition is the main result of this section.
\begin{proposition}\label{prop2.2}
Let $ \epsilon \in (0, \epsilon_0)$ and let $s:=\frac{1}{N}\sum\limits_{j=1}^NG_{jj}$\,. If $|G_{kl}| \prec N^{\delta}\,,$ for $k ,l \in \llbracket 1, N \rrbracket\,,$ it follows that, for all $ z \in \bold{S}\,,$ we have
\begin{equation}
1+zs+s^2=O_{\prec}\left(\frac{(1+|z|)N^{5\delta}}{\sqrt{N\eta}}\right)\,.
\end{equation}
\end{proposition}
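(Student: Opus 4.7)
The starting point is \eqref{eq2.1}, $1+zG_{11}=\sum_{i=1}^N h_i G_{i1}$. My plan is to use the resolvent identity associated with the perturbation $H=H^{(1i)}+\Delta^{(1i)}$ to expand each $G_{i1}$ in powers of $h_i$, exploiting the key fact that $G^{(1i)}$ is independent of $h_i$, and thereby convert the right-hand side into an expression of the form $-s\,G_{11}+O_{\prec}(\cdot)$. In other words, the goal is first to establish an approximate self-consistent equation $1+G_{11}(z+s)=O_{\prec}(\cdot)$ for row $1$, and then to symmetrize by averaging over rows.

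Iterating the resolvent identity once yields, for each $i\ne 1$,
$$G_{i1}\;=\;G^{(1i)}_{i1}\,-\,h_i(G^{(1i)}_{i1})^{2}\,-\,\bar h_i\,G^{(1i)}_{ii}G^{(1i)}_{11}\,+\,R_i,$$
with $R_i$ at least quadratic in $(h_i,\bar h_i)$; the $i=1$ contribution $h_1G_{11}$ is immediately $O_{\prec}(N^{\delta-1/2})$ by Lemma~\ref{2.1}. Multiplying by $h_i$ and summing decomposes $1+zG_{11}$ into four types of sums, which I would bound as follows. The linear sum $\sum_{i\ne 1}h_iG^{(1i)}_{i1}$ has $\mathbb{E}_1$-mean zero by independence of $h_i$ from $G^{(1i)}$; combining $\mathbb{E}|h_i|^{2}=1/N$ with the Ward-identity bound $\sum_i|G^{(1i)}_{i1}|^{2}\prec N^{\delta}/\eta$ and Chebyshev gives $O_{\prec}(N^{3\delta/2}/\sqrt{N\eta})$. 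The quadratic sum $\sum_{i\ne 1}h_i^{2}(G^{(1i)}_{i1})^{2}$ is similarly centered thanks to $\mathbb{E}[h_i^{2}]=0$ in the Hermitian case. For the main term $\sum_{i\ne 1}|h_i|^{2}G^{(1i)}_{ii}G^{(1i)}_{11}$, I would apply Lemma~\ref{concentration of measure result} together with Lemma~\ref{lemma3.1} to replace the product $G^{(1i)}_{ii}G^{(1i)}_{11}$ by its $\mathbb{E}_1$-conditional expectation up to $O_{\prec}(N^{5\delta/2}/\sqrt{N\eta})$, then average the variables $|h_i|^{2}-1/N$, and finally apply one more resolvent identity (together with Lemma~\ref{2.3}) to replace $G^{(1i)}_{kl}$ by $G_{kl}$; this yields the desired $G_{11}\cdot s+O_{\prec}(\cdot)$. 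The higher-order piece $\sum_{i\ne 1}h_i R_i$ carries additional factors of $h_i\sim N^{-1/2+\epsilon}$ and is smaller. Collecting these estimates gives $1+G_{11}(z+s)=O_{\prec}\!\left((1+|z|)N^{5\delta}/\sqrt{N\eta}\right)$, with the $(1+|z|)$ factor coming from the $zG_{11}$ term on the left when crude bounds are used on the remainder.

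Since nothing in the above depends on the distinguished index being $1$, the same argument applied to the perturbation $H=H^{(ai)}+\Delta^{(ai)}$ yields $1+G_{aa}(z+s)=O_{\prec}((1+|z|)N^{5\delta}/\sqrt{N\eta})$ uniformly in $a\in\llbracket 1,N\rrbracket$; averaging over $a$ produces $1+zs+s^{2}=O_{\prec}((1+|z|)N^{5\delta}/\sqrt{N\eta})$, and the uniform extension to every $z\in\bold{S}$ follows from the $1/N^{4}$-net and Lipschitz argument already used at the end of Lemma~\ref{concentration of measure result}. The main obstacle is the concentration bookkeeping: because $G^{(1i)}$ still depends on $\{h_j:j\ne i\}$, the sums in the expansion are not sums of independent random variables, and it is precisely the combined use of Lemma~\ref{concentration of measure result} (for single entries) and Lemma~\ref{lemma3.1} (for products) that converts these coupled sums into controllable deterministic-like expressions with the advertised $N^{5\delta}/\sqrt{N\eta}$ error.
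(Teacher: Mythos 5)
Your roadmap---expand $G_{i1}$ in the entry $h_i$ via the resolvent identity, extract $-|h_i|^2 G^{(1i)}_{ii}G^{(1i)}_{11}$ as the main term, convert the $G^{(1i)}$'s back to $G$'s with another resolvent identity, pass from products to single entries via Lemma~\ref{lemma3.1} and Lemma~\ref{concentration of measure result}, average over rows, and upgrade to all $z\in\bold S$ by a $1/N^4$-net---follows the structure of the paper's proof. The genuine gap is that you decline to apply $\mathbb{E}_1$ at the outset. The paper applies $\mathbb{E}_1$ to $1+zG_{11}=\sum_i H_{1i}G_{i1}$ immediately; since $G^{(1i)}_{i1}$ is independent of $h_i$ conditionally on $\theta$ and $\mathbb{E}[H_{1i}]=0$, the term $\mathbb{E}_1\sum_i H_{1i}G^{(1i)}_{i1}$ vanishes identically, and the only fluctuation left to control is $G_{11}-\mathbb{E}_1 G_{11}$, which Lemma~\ref{concentration of measure result} handles once at the end. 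You instead propose to bound the random variable $\sum_i h_i G^{(1i)}_{i1}$ directly, quoting $\mathbb{E}|h_i|^2=1/N$, a Ward-type bound on $\sum_i|G^{(1i)}_{i1}|^2$, and ``Chebyshev.'' A second-moment Chebyshev bound gives a tail of order $N^{-2\epsilon}$, not $N^{-D}$ for every $D$, so it does not produce a $\prec$ estimate. If instead you intend Chebyshev with $2q$-th moments, you must bound $\|\sum_i h_iG^{(1i)}_{i1}\|_{2q}$, which is a nontrivial martingale-moment computation because the summands are not independent (each $G^{(1i)}_{i1}$ depends on $\{h_j:j\neq i\}$, and the cross terms $\mathbb{E}_1[h_i\bar h_j G^{(1i)}_{i1}\overline{G^{(1j)}_{j1}}]$ do not factor); this is precisely the type of linear-form large-deviation estimate the paper's method is built to avoid, and neither Lemma~\ref{concentration of measure result} (which concerns a single entry $G_{kl}$) nor anything else in the paper supplies it. The same objection applies to ``average the variables $|h_i|^2-1/N$'' in your treatment of the main term: if $\mathbb{E}_1$ has not been taken, $\sum_i(|h_i|^2-1/N)G^{(1i)}_{ii}G^{(1i)}_{11}$ is yet another weakly dependent fluctuation requiring a concentration estimate not available in the paper. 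Conditioning first, as the paper does, kills both problems for free.

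A secondary issue: you center $\sum_i h_i^2(G^{(1i)}_{i1})^2$ by invoking $\mathbb{E}[h_i^2]=0$. Definition~\ref{defWigner} does not impose $\mathbb{E}[H_{ij}^2]=0$; it fails for real symmetric Wigner matrices, where $\mathbb{E}[H_{ij}^2]=\mathbb{E}|H_{ij}|^2\asymp N^{-1}$. The paper instead bounds this term in absolute value on $\Xi\cap\tilde\Xi$ using Lemma~\ref{Gamma0}, Lemma~\ref{2.3}, Cauchy--Schwarz and the Ward identity, which yields the required order with no cancellation; you should do the same rather than rely on a moment assumption the definition does not grant.
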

\begin{proof}
Let us fix $\epsilon \in (0, \epsilon_0)$\,. We first prove the result for fixed $\eta_0$ as in \eqref{equiv5.1}.

Using the definition of stochastic domination for $|G_{kl}|$ and Lemma \ref{2.1} we have that there exist large $D_1$ and $D_2$ such that $\mathbb{P}(\Xi^c) \leq N^{-D_1}$ and $\mathbb{P}(\tilde{\Xi}^c) \leq N^{-D_2}\,.$ Let us define 
$$D\;:=\; \min \{D_1,D_2\}\,.$$
Using the resolvent expansion in \eqref{eq2.1}\,, we obtain that
\begin{equation}
1+zG_{11}=\sum\limits_{i=1}^NH_{1i}G_{i1}=\sum\limits_{i=1}^NH_{1i}\left(G_{i1}^{(1i)}-G_{ii}^{(1i)}H_{i1}G_{11}-G_{i1}^{(1i)}H_{1i}G_{i1}\bold{1}(i\neq 1)\right)\,.
\end{equation}
Performing another resolvent expansion for $G_{11}$\,, we obtain that 
\begin{align}\label{3.8}
&\sum\limits_{i=1}^NH_{1i}\left(G_{i1}^{(1i)}-G_{ii}^{(1i)}H_{i1}G_{11}-G_{i1}^{(1i)}H_{1i}G_{i1}\bold{1}(i\neq 1)\right)=\nonumber\\
&\sum\limits_{i=1}^NH_{1i}G_{i1}^{(1i)}-\sum\limits_{i=1}^N|H_{i1}|^{2}G_{ii}^{(1i)}\left(G_{11}^{(1i)}-G_{1i}^{(1i)}H_{i1}G_{11}-G_{11}^{(1i)}H_{1i}G_{i1}\bold{1}(i\neq 1)\right)\nonumber\\&+O\left(\sum\limits_{i=1}^N|G_{i1}^{(1i)}||G_{i1}||H_{1i}|^2\right)\,.
\end{align}
Furthermore, we rewrite the right hand side term in \eqref{3.8} in the form
\begin{align}
&\sum\limits_{i=1}^NH_{i1}G_{i1}^{(1i)}-\sum\limits_{i=1}^N|H_{i1}|^2G_{ii}^{(1i)}G_{11}^{(1i)}\\
&+O\left( \sum\limits_{i=1}^N|H_{i1}|^2(|G_{1i}^{(1i)}||G_{ii}^{(1i)}||G_{11}||H_{i1}|+|G_{11}^{(1i)}||G_{i1}||G_{ii}^{(1i)}||H_{1i}|\right)\nonumber\\&+O\left(\sum\limits_{i=1}^N|H_{1i}|^2|G_{i1}^{(1i)}||G_{i1}|\right)\,.
\end{align}
Using that $H_{1i}$ and $G_{i1}^{(1i)}$ are independent and taking the expectation $\mathbb{E}_1\,,$ we further obtain that
\begin{align}\label{sumstobe}
&1+z\mathbb{E}_{1}G_{11}=-\mathbb{E}_1\sum\limits_{i=1}^N|H_{i1}|^2G_{ii}^{(1i)}G_{11}^{(1i)}\nonumber\\
&+O\left(\mathbb{E}_1\sum\limits_{i=1}^N|H_{i1}|^2(|G_{1i}^{(1i)}||G_{ii}^{(1i)}||G_{11}||H_{i1}|+|G_{11}^{(1i)}||G_{i1}||G_{ii}^{(1i)}||H_{1i}|\right)\nonumber\\&+ O\left(\mathbb{E}_{1}\sum\limits_{i=1}^N|H_{i1}|^2|G_{i1}^{(1i)}||G_{i1}|\right)\,.
\end{align}
Using the connection between notations from section $1\,,$ we have that
$|G_{ii}^{(1i)}|=|G^i_{ii}(0)|,$
$|G_{11}^{(1i)}|=|G^i_{11}(0)|$
and $|G_{1i}^{(1i)}|=|G^i_{1i}(0)|\,.$
Truncating the  summations in \eqref{sumstobe} with respect to the events \eqref{event}, we obtain that
\begin{align*}
(&1+z\mathbb{E}_{1}G_{11})(\bold{1}(\Xi)+ \bold{1}(\Xi^c))(\bold{1}(\tilde{\Xi})+\bold{1}(\tilde{\Xi}^c))\\&=-\mathbb{E}_1\sum\limits_{i=1}^N|H_{i1}|^2G_{ii}^{(1i)}G_{11}^{(1i)}(\bold{1}(\Xi)+ \bold{1}(\Xi^c))(\bold{1}(\tilde{\Xi})+\bold{1}(\tilde{\Xi}^c))\\
&+O(\mathbb{E}_1\sum\limits_{i=1}^N|H_{1i}|^2(|G_{1i}^{(1i)}||G_{ii}^{(1i)}||G_{11}||H_{i1}|\\&+|G_{11}^{(1i)}||G_{i1}||G_{ii}^{(1i)}||H_{1i}|)(\bold{1}(\Xi)+ \bold{1}(\Xi^c))(\bold{1}(\tilde{\Xi})+\bold{1}(\tilde{\Xi}^c))\\
&+\mathbb{E}_{1}\sum\limits_{i=1}^N|H_{1i}|^2|G_{i1}^{(1i)}||G_{i1}|)(\bold{1}(\Xi)+ \bold{1}(\Xi^c))(\bold{1}(\tilde{\Xi})+\bold{1}(\tilde{\Xi}^c)))\,.
\end{align*}
We investigate the terms that contain absolute values of the resolvent entries. We divide them into two categories depending on the numbers of entries of $H$ that they contain. The terms in the same category will be estimated in the same manner. After doing all the possible truncations there are four truncated terms that appear for each term of each category. This leads to eight truncated terms for the first category and four truncated terms for the second category.\\
An example of the four truncated terms that appear for a term in the first category is
\begin{align}
&\mathbb{E}_1\sum\limits_{i=1}^N|H_{1i}|^2(|G_{1i}^{(1i)}||G_{ii}^{(1i)}||G_{11}||H_{i1}|)(\bold{1}(\Xi)+ \bold{1}(\Xi^c))(\bold{1}(\tilde{\Xi})+\bold{1}(\tilde{\Xi}))=\nonumber\\
&\mathbb{E}_1\sum\limits_{i=1}^N|H_{1i}|^2(|G_{1i}^{(1i)}||G_{ii}^{(1i)}||G_{11}||H_{i1}|)\bold{1}(\Xi)\bold{1}(\tilde{\Xi})\nonumber\\&+\mathbb{E}_1\sum\limits_{i=1}^N|H_{1i}|^2(|G_{1i}^{(1i)}||G_{ii}^{(1i)}||G_{11}||H_{i1}|)\bold{1}(\Xi^c)\bold{1}(\tilde{\Xi})\nonumber\\
&+\mathbb{E}_1\sum\limits_{i=1}^N|H_{1i}|^2(|G_{1i}^{(1i)}||G_{ii}^{(1i)}||G_{11}||H_{i1}|)\bold{1}(\Xi)\bold{1}(\tilde{\Xi}^c)\nonumber\\&+\mathbb{E}_1\sum\limits_{i=1}^N|H_{1i}|^2(|G_{1i}^{(1i)}||G_{ii}^{(1i)}||G_{11}||H_{i1}|)\bold{1}(\Xi^c)\bold{1}(\tilde{\Xi}^c)\,.
\end{align}
We first estimate $\mathbb{E}_1\sum\limits_{i=1}^N|H_{1i}|^2(|G_{1i}^{(1i)}||G_{ii}^{(1i)}||G_{11}||H_{i1}|)\bold{1}(\Xi)\bold{1}(\tilde{\Xi})\,.$
Using the definition of the events $\Xi$ and $\tilde{\Xi}\,,$ 
we prove the finite variations for the random variables given by linear combinations of monomials of $H$ and $G\,.$ Hence, we can apply again Cauchy-Schwarz inequality in the form 
$$ \mathbb{E}[X\bold{1}(\Xi)] \leq \sqrt{\mathbb{P}[\Xi]} \sqrt{\mathbb{E}[X^2]}\,,$$
where $X(H,G)$ is a linear combination of monomials in the entries of $H$ and $G\,.$\\
Let us consider the linear combination 
\begin{align}
X_0\;:=\; \sum\limits_{i=1}^N|H_{1i}|^2(|G_{1i}^{(1i)}||G_{ii}^{(1i)}||G_{11}||H_{i1}|)\bold{1}(\tilde{\Xi})\,.
\end{align}
We have that
\begin{align}
&\mathbb{E}_1\sum\limits_{i=1}^N|H_{1i}|^2(|G_{1i}^{(1i)}||G_{ii}^{(1i)}||G_{11}||H_{i1}|)\bold{1}(\Xi)\bold{1}(\tilde{\Xi})\nonumber\\&\leq \sqrt{\mathbb{P}(\Xi)}\sqrt{\mathbb{E}_1(\sum\limits_{i=1}^N |H_{1i}|^2|G_{1i}^{(1i)}||G_{ii}^{(1i)}||G_{11}||H_{i1}|\bold{1}(\tilde{\Xi}))^2}\,.
\end{align}
Using that $\mathbb{P}$ is a probability measure, that $\eta_0=N$ , that $(\sum\limits_{i=1}^N|a_i|)^2 \leq N\sum\limits_{i=1}^N|a_i|^2\,,$ for $a_i \in \mathbb{C}\,,$ Lemma \ref{Gamma0}, Lemma \ref{2.3} and Ward identity we further obtain that
\begin{align}
&\sqrt{\mathbb{P}(\Xi)}\sqrt{\mathbb{E}_1(\sum\limits_{i=1}^N |H_{1i}|^2|G_{1i}^{(1i)}||G_{ii}^{(1i)}||G_{11}||H_{i1}|\bold{1}(\tilde{\Xi}))^2}\nonumber\\
&\leq\sqrt{\mathbb{E}_1 N(N^{\epsilon-1/2})^6(2N^{\epsilon/3+\delta})^2(N^{\epsilon/3+\delta})^2\sum\limits_{i=1}^N|G^{(1i)}_{1i}|^2}\nonumber\\
&\leq 4N^{23\epsilon/6+5\delta/2-3/2}\,.
\end{align}
On the remaining three terms, we use again the brutal estimates $|G_{kl}| \leq N$ and the $L^p$ bounds on $H\,.$ In this manner we prove the finite variations for the random variables given by monomials of $H$ and $G\,.$ Hence, we can apply Cauchy-Schwarz inequality in the form 
$$ \mathbb{E}[X\bold{1}(\Xi^c)] \leq \sqrt{\mathbb{P}[\Xi^c]} \sqrt{\mathbb{E}[X^2]}\,,$$
where $X(H,G)$ is a monomial in the entries of $H$ and $G\,.$ 
All the remaining three type of terms are estimated in the same manner using this strategy. For all of them, we use the connection between notations in section $1$ to get that $|G_{kl}^{(1i)}|=|G^i_{kl}(0)|\,.$ Via this identification, we use the bounds in Lemma \ref{Gamma0} for $|G_{kl}^{(1i)}|\,.$ We work the details for only two of them, namely $ \mathbb{E}_1\sum\limits_{i=1}^N|H_{1i}|^2(|G_{1i}^{(1i)}||G_{ii}^{(1i)}||G_{11}||H_{i1}|)\bold{1}(\Xi^c)\bold{1}(\tilde{\Xi})$ and $\mathbb{E}_1\sum\limits_{i=1}^N|H_{1i}|^2(|G_{1i}^{(1i)}||G_{ii}^{(1i)}||G_{11}||H_{i1}|)\bold{1}(\Xi)\bold{1}(\tilde{\Xi}^c)\,.$ The last term is estimated in the same manner as  $\mathbb{E}_1\sum\limits_{i=1}^N|H_{1i}|^2(|G_{1i}^{(1i)}||G_{ii}^{(1i)}||G_{11}||H_{i1}|)\bold{1}(\Xi^c)\bold{1}(\tilde{\Xi})$ by using the brutal estimates $|G_{kl}| \leq N$ and $|G_{kl}^{(1i)}|\leq N\,.$
In order to estimate the following quantity
$ \mathbb{E}_1\sum\limits_{i=1}^N|H_{1i}|^2(|G_{1i}^{(1i)}||G_{ii}^{(1i)}||G_{11}||H_{i1}|)\bold{1}(\Xi^c)\bold{1}(\tilde{\Xi})$, let us define the monomial
$$X_1(H,G)\;:=\; |H_{1i}|^2(|G_{1i}^{(1i)}||G_{ii}^{(1i)}||G_{11}||H_{i1}|)\bold{1}(\tilde{\Xi})\,.$$ 
Using Cauchy-Schwarz inequality for the monomial $X_1\,,$ the brutal estimate $|G_{kl}|\leq N$ on $\Xi^c$ and Lemma \ref{Gamma0}\,, we obtain that
\begin{align}
&\sum\limits_{i=1}^N\mathbb{E}_1|H_{1i}|^2(|G_{1i}^{(1i)}||G_{ii}^{(1i)}||G_{11}||H_{i1}|)\bold{1}(\Xi^c)\bold{1}(\tilde{\Xi})\nonumber\\ &\leq 
\sum\limits_{i=1}^N\sqrt{\mathbb{P}(\Xi^c)}\sqrt{\mathbb{E}_1|H_{1i}|^4(|G_{1i}^{(1i)}|^2|G_{ii}^{(1i)}|^2|G_{11}|^2|H_{i1}|^2)\bold{1}(\tilde{\Xi})}\nonumber\\
&\leq \sum\limits_{i=1}^NN^{-D/2}\sqrt{N^{6}\mathbb{E}_1|H_{1i}|^6}\nonumber\\
&\leq N^{-D/2}C_6^3N^{5/2}\,.
\end{align}
where in the last inequality we used the condition $(iii)$ in the definition of the Wigner matrices for the $L^6$ norm\,.\\
For estimating $\mathbb{E}_1\sum\limits_{i=1}^N|H_{1i}|^2(|G_{1i}^{(1i)}||G_{ii}^{(1i)}||G_{11}||H_{i1}|)\bold{1}(\Xi)\bold{1}(\tilde{\Xi}^c)$  we define the monomial 
$$ X_2(H,G)\;:=\; |H_{1i}|^2(|G_{1i}^{(1i)}||G_{ii}^{(1i)}||G_{11}||H_{i1}|)\bold{1}(\Xi)\,,$$
and we take advantage of the low probability term as in section 2, i.e. we brutally estimate $|G_{kl}| \leq N$ and $|G_{kl}^{(1i)}|\leq N\,.$ Using the Cauchy-Schwarz inequality, we obtain that
\begin{align}
\mathbb{E}_1\sum\limits_{i=1}^N|H_{1i}|^2(|G_{1i}^{(1i)}||G_{ii}^{(1i)}||G_{11}||H_{i1}|)\bold{1}(\Xi)\bold{1}(\tilde{\Xi}^c)\leq N^{-D/2}C_6^3N^{5/2}\,. 
\end{align}
For the four corresponding truncated terms we obtain the final  estimate $4N^{23\epsilon/6+5\delta/2-3/2}+ 3N^{-D/2}C_6^3N^{5/2}\,.$\\
There are four truncated terms for each term of the first category so the final estimate for the terms in the first category is  
\begin{align}
8N^{23\epsilon/6+5\delta/2-3/2}+ 6N^{-D/2}C_6^3N^{5/2}\,.
\end{align}
We proceed exactly the same for the term in the second category.
The truncated terms that appear for the term in the second category are
\begin{align}
&\mathbb{E}_{1}\sum\limits_{i=1}^N(|H_{1i}|^2|G_{i1}^{(1i)}||G_{i1}|)(\bold{1}(\Xi)+ \bold{1}(\Xi^c))(\bold{1}(\tilde{\Xi})+\bold{1}(\tilde{\Xi}^c))=\nonumber\\
&\mathbb{E}_{1}\sum\limits_{i=1}^N(|H_{1i}|^2|G_{i1}^{(1i)}||G_{i1}|)\bold{1}(\Xi)\bold{1}(\tilde{\Xi}) +\mathbb{E}_{1}\sum\limits_{i=1}^N(|H_{1i}|^2|G_{i1}^{(1i)}||G_{i1}|)\bold{1}(\Xi^c)\bold{1}(\tilde{\Xi})+\nonumber\\
&\mathbb{E}_{1}\sum\limits_{i=1}^N(|H_{1i}|^2|G_{i1}^{(1i)}||G_{i1}|)\bold{1}(\Xi)\bold{1}(\tilde{\Xi}^c) +\mathbb{E}_{1}\sum\limits_{i=1}^N(|H_{1i}|^2|G_{i1}^{(1i)}||G_{i1}|)\bold{1}(\Xi^c)\bold{1}(\tilde{\Xi}^c)\,.
\end{align}
Via Cauchy-Schwarz inequality applied for 
$$X \;:=\; \sum\limits_{i=1}^N|H_{1i}|^2||G_{i1}^{(1i)}||G_{i1}|\bold{1}(\tilde{\Xi})\,,$$
we obtain the estimate
\begin{align}
\mathbb{E}_{1}\sum\limits_{i=1}^N(|H_{1i}|^2|G_{i1}^{(1i)}||G_{i1}|)\bold{1}(\Xi)\bold{1}(\tilde{\Xi}) \leq \sqrt{\mathbb{P}(\Xi)}\sqrt{\mathbb{E}_1X^2}\,.
\end{align}
Using that $\mathbb{P}$ is a probability measure we bound with $1$ the first radical\,. For the second radical, using the estimate $|\sum\limits_{i=1}^Na_i|^2 \leq N\sum\limits_{i=1}^N|a_i|^2\,,$ for $a_i \in \mathbb{C}\,,$ along with Lemma \ref{Gamma0}, Lemma \ref{2.3} and Ward identity for $\eta_0=N$, we get the estimate
\begin{align}
\sqrt{\mathbb{E}_1 N\sum\limits_{i=1}^N|H_{1i}|^4||G_{i1}^{(1i)}|^2|G_{i1}|^2\bold{1}(\tilde{\Xi}) } &\leq N^{\epsilon/3+\delta}N^{2\epsilon-1}\sqrt{4N\mathbb{E}_1\sum\limits_{i=1}^N|G_{i1}|^2}\nonumber\\
&\leq 2N^{15\epsilon/6+3\delta/2-1}\,.
\end{align} 
Proceeding in the same manner as for the terms in the first category (i.e. using the Cauchy-Schwarz inequality only for monomials) we obtain for the remaining three terms the estimate $3C_4^2N^{2}N^{-D/2}\,,$
where $C_4$ is the constant corresponding to the $L^4$ norm estimate as in the definition of Wigner matrices.
For our particular example of term in the second category, we obtain the final estimate
\begin{align}
 2N^{15\epsilon/6+3\delta/2-1}+3C_4^2N^2N^{-D/2}\,.
\end{align}
Combining the two final estimates for the terms in the first and second category, we obtain that
\begin{align}\label{term3}
&1+z\mathbb{E}_{1}G_{11} =-\mathbb{E}_1\sum\limits_{i=1}^N|H_{1i}|^2G_{ii}^{(1i)}G_{11}^{(1i)}\nonumber\\
&+O\left(8N^{23\epsilon/6+5\delta/2-3/2} + 6N^{-D/2}C_6^3N^{5/2}+2N^{15\epsilon/6+3\delta/2-1}+3C_4^2N^{2}N^{-D/2}\right)\,.
\end{align}
Using the linearity of the conditional expectation and that $H_{1i}$ and  $H_{i1}$ are independent of $G_{ii}^{(1i)}$ and of $G_{11}^{(1i)}$ we obtain that 
\begin{align}
-\mathbb{E}_1\sum\limits_{i=1}^NH_{1i}H_{i1}G_{ii}^{(1i)}G_{11}^{(1i)}=-\frac{1}{N}\sum\limits_{i=1}^N\mathbb{E}_1G_{ii}^{(1i)}G_{11}^{(1i)}\,.
\end{align} 
Next, we develop a procedure for estimating the product $G^{(1i)}_{ii}G^{(1i)}_{11}$  in terms of $G_{ii}G_{11}$ and other bounds.
For this, we redo the resolvent identity in \eqref{term3} for 
\begin{align}
\tilde{H}=H+\bar{\Delta}\,,
\end{align}
with $\tilde{H}=H^{(1i)}$ and $\bar{\Delta}^{(1i)} = - \Delta^{(1i)}\,.$\\
Using the resolvent identity, we obtain that
\begin{align}
G_{ii}^{(1i)}=G_{ii}+G_{ii}H_{i1}G_{1i}^{(1i)}+\bold{1}(i\neq 1) G_{i1}H_{1i}G_{ii}^{(1i)}\,,
\end{align}
and
\begin{align}
G_{11}^{(1i)}=G_{11}+G_{1i}H_{i1}G_{11}^{(1i)}+\bold{1}(i\neq 1)G_{11}H_{1i}G_{i1}^{(1i)}\,.
\end{align}
Let $$\zeta_i :=(G_{ii}+G_{ii}H_{i1}G_{1i}^{(1i)}+\bold{1}(i\neq 1) G_{i1}H_{1i}G_{ii}^{(1i)})(G_{11}+G_{1i}H_{i1}G_{11}^{(1i)}+\bold{1}(i\neq 1)G_{11}H_{1i}G_{i1}^{(1i)}).$$
It follows that
\begin{align}
-\frac{1}{N}\sum\limits_{i=1}^N\mathbb{E}_1G_{ii}^{(1i)}G_{11}^{(1i)}=-\frac{1}{N}\sum\limits_{i=1}^N\mathbb{E}_1\zeta_i\,.
\end{align}
We further obtain that 
\begin{align}\label{longequation}
&-\frac{1}{N}\sum\limits_{i=1}^N\mathbb{E}_1G_{ii}^{(1i)}G_{11}^{(1i)}=-\frac{1}{N}\sum\limits_{i=1}^N\mathbb{E}_1G_{ii}G_{11} \nonumber\\
&+O( \frac{1}{N}\sum\limits_{i=1}^N\mathbb{E}_1|G_{ii}||G_{1i}||H_{i1}||G_{11}^{(1i)}| +\frac{1}{N}\sum\limits_{i=1}^N\mathbb{E}_1|G_{11}||G_{ii}||H_{i1}||G_{1i}^{(1i)}|\nonumber\\
&+\frac{1}{N}\sum\limits_{i=1}^N\mathbb{E}_1|G_{ii}||G_{11}||H_{1i}||G_{i1}^{(1i)}|+\frac{1}{N}\sum\limits_{i=1}^N\mathbb{E}_1|G_{11}||G_{i1}||H_{1i}||G_{ii}^{(1i)}|\nonumber\\
&+\frac{1}{N}\sum\limits_{i=1}^N\mathbb{E}_1|G_{ii}||H_{i1}||G_{1i}^{(1i)}||G_{1i}||H_{i1}||G_{11}^{(1i)}|\nonumber\\&+\frac{1}{N}\sum\limits_{i=1}^N\mathbb{E}_1|G_{ii}||H_{i1}||G_{1i}^{(1i)}||G_{11}||H_{1i}||G_{i1}^{(1i)}|\nonumber\\
&+\frac{1}{N}\sum\limits_{i=1}^N\mathbb{E}_1|G_{i1}||H_{1i}|G_{ii}^{(1i)}||G_{1i}||H_{i1}||G_{1i}^{(1i)}|\nonumber\\&+\frac{1}{N}\sum\limits_{i=1}^N\mathbb{E}_1G_{i1}||H_{1i}||G_{ii}^{(1i)}||G_{11}||H_{1i}||G_{i1}^{(1i)}|) \,.
\end{align}
The terms  in \eqref{longequation} can be divided into two categories: terms in which entries of $H$ appear only once, and terms in which entries of $H$ appear twice. The terms in the same category will be estimated in the same manner. After doing all the possible truncations there are four truncated terms that appear for each term of each category. This leads to sixteen truncated terms for the first category and sixteen truncated terms for the second category.\\
An example of the truncated terms that appear for a term in the first category is
\begin{align}
&\frac{1}{N}\sum\limits_{i=1}^N\mathbb{E}_1|G_{ii}||G_{1i}||H_{i1}||G_{11}^{(1i)}|(\bold{1}(\Xi)+ \bold{1}(\Xi^c))(\bold{1}(\tilde{\Xi})+\bold{1}(\tilde{\Xi}^c))\nonumber\\
&=\frac{1}{N}\sum\limits_{i=1}^N\mathbb{E}_1|G_{ii}||G_{1i}||H_{i1}||G_{11}^{(1i)}|\bold{1}(\Xi)\bold{1}(\tilde{\Xi})\nonumber\\&+\frac{1}{N}\sum\limits_{i=1}^N\mathbb{E}_1|G_{ii}||G_{1i}||H_{i1}||G_{11}^{(1i)}|\bold{1}(\Xi^c)\bold{1}(\tilde{\Xi})\nonumber\\
&+\frac{1}{N}\sum\limits_{i=1}^N\mathbb{E}_1|G_{ii}||G_{1i}||H_{i1}||G_{11}^{(1i)}|\bold{1}(\Xi)\bold{1}(\tilde{\Xi}^c)\nonumber\\&+\frac{1}{N}\sum\limits_{i=1}^N\mathbb{E}_1|G_{ii}||G_{1i}||H_{i1}||G_{11}^{(1i)}|\bold{1}(\Xi^c)\bold{1}(\tilde{\Xi}^c)\,.
\end{align}
Like before, using Cauchy-Schwarz inequality along with Lemma \ref{Gamma0}\,, Lemma \ref{2.3} and Ward identity we obtain for $\frac{1}{N}\sum\limits_{i=1}^N\mathbb{E}_1|G_{ii}||G_{1i}||H_{i1}||G_{11}^{(1i)}|\bold{1}(\Xi)\bold{1}(\tilde{\Xi})$ the estimate
\begin{align}
&\frac{1}{N}\sum\limits_{i=1}^N\mathbb{E}_1|G_{ii}||G_{1i}||H_{i1}||G_{11}^{(1i)}|\bold{1}(\Xi)\bold{1}(\tilde{\Xi})\nonumber\\&\leq \frac{1}{N}\sqrt{\mathbb{P}(\Xi)}\sqrt{ \mathbb{E}_1(\sum\limits_{i=1}^N |G_{ii}||G_{1i}||H_{i1}||G_{11}^{(1i)}|\bold{1}(\tilde{\Xi}))^2}\nonumber\\
&\leq \frac{2}{N}N^{2\epsilon/3+2\delta}N^{\epsilon-1/2}\sqrt{N\sum\limits_{i=1}^N|G_{1i}|^2}\nonumber\\
&=2N^{11\epsilon/3+5\delta/2-3/2}\,.
\end{align}
On the remaining three terms we apply Cauchy-Schwarz inequality like before.
All the remaining three type of terms are estimated in the same manner. For all of them, we use the connection between notations from section $1$ to get that $|G_{kl}^{(1i)}|=|G^i_{kl}(0)|\,.$ Via this identification we use the bounds in Lemma \ref{Gamma0} for $|G_{kl}^{(1i)}|\,.$ We work the details for only one of them, namely $\frac{1}{N}\sum\limits_{i=1}^N\mathbb{E}_1|G_{ii}||G_{1i}||H_{i1}||G_{11}^{(1i)}|\bold{1}(\Xi^c)\bold{1}(\tilde{\Xi}).$ The remaining terms are estimated in the same manner by using the brutal estimates $|G_{kl}| \leq N$ and $|G_{kl}^{(1i)}|\leq N\,.$
For estimating 
$ \frac{1}{N}\sum\limits_{i=1}^N\mathbb{E}_1|G_{ii}||G_{1i}||H_{i1}||G_{11}^{(1i)}|\bold{1}(\Xi^c)\bold{1}(\tilde{\Xi})$\,, let us define the monomial
$$X_3(H,G)\;:=\; |G_{ii}||G_{1i}||H_{i1}||G_{11}^{(1i)}|\bold{1}(\tilde{\Xi})\,.$$ 
Using Cauchy-Schwarz inequality for the monomial $X_3\,,$ the brutal estimate $|G_{kl}|\leq N$ on $\Xi^c$ and Lemma \ref{Gamma0}\,, we obtain that
\begin{align}
&\frac{1}{N}\sum\limits_{i=1}^N\mathbb{E}_1|G_{ii}||G_{1i}||H_{1i}||G_{11}^{(1i)}|\bold{1}(\Xi^c)\bold{1}(\tilde{\Xi})\nonumber\\ &\leq \frac{1}{N} 
\sum\limits_{i=1}^N\sqrt{\mathbb{P}(\Xi^c)}\sqrt{\mathbb{E}_1|G_{ii}|^2|G_{1i}|^2|H_{1i}|^2|G_{11}^{(1i)}|^2\bold{1}(\tilde{\Xi})}\nonumber\\
&\leq \frac{1}{N}\sum\limits_{i=1}^NN^{-D/2}\sqrt{N^{6}\mathbb{E}_1|H_{1i}|^2}\nonumber\\
&\leq N^{-D/2}N^{5/2}\,.
\end{align}
where in the last inequality we used the estimate for the variance as in the condition $(ii)$ from the definition of the Wigner matrices. 
The final estimate for the four truncated terms from our example is $2N^{11\epsilon/3+5\delta/2-3/2}+3N^{-D/2}N^{5/2}.$
There are four truncated terms for each term of the first category so the final estimate for the terms in the first category
\begin{align}
8N^{11\epsilon/3+5\delta/2-3/2}+12N^{-D/2}N^{5/2}\,.
\end{align}
An example of the truncated terms that appear for a term in the second category is
\begin{align}
&\frac{1}{N}\sum\limits_{i=1}^N\mathbb{E}_1|G_{ii}||H_{i1}||G_{1i}^{(1i)}||G_{1i}||H_{i1}||G_{11}^{(1i)}|(\bold{1}(\Xi)+ \bold{1}(\Xi^c))(\bold{1}(\tilde{\Xi})+\bold{1}(\tilde{\Xi}^c))\nonumber\\
&=\frac{1}{N}\sum\limits_{i=1}^N\mathbb{E}_1|G_{ii}||H_{i1}||G_{1i}^{(1i)}||G_{1i}||H_{i1}||G_{11}^{(1i)}|\bold{1}(\Xi)\bold{1}(\tilde{\Xi})\nonumber\\&+\frac{1}{N}\sum\limits_{i=1}^N\mathbb{E}_1|G_{ii}||H_{i1}||G_{1i}^{(1i)}||G_{1i}||H_{i1}||G_{11}^{(1i)}|\bold{1}(\Xi^c)\bold{1}(\tilde{\Xi})\nonumber\\
&+\frac{1}{N}\sum\limits_{i=1}^N\mathbb{E}_1|G_{ii}||H_{i1}||G_{1i}^{(1i)}||G_{1i}||H_{i1}||G_{11}^{(1i)}|\bold{1}(\Xi)\bold{1}(\tilde{\Xi}^c)\nonumber\\&+\frac{1}{N}\sum\limits_{i=1}^N\mathbb{E}_1|G_{ii}||H_{i1}||G_{1i}^{(1i)}||G_{1i}||H_{i1}||G_{11}^{(1i)}|\bold{1}(\Xi^c)\bold{1}(\tilde{\Xi}^c)\,.
\end{align}
Using the definition of the events $\Xi$ and $\tilde{\Xi}$\,, and Lemma \ref{Gamma0} we obtain directly the estimate
\begin{align}
\frac{1}{N}\sum\limits_{i=1}^N\mathbb{E}_1|G_{ii}||H_{i1}||G_{1i}^{(1i)}||G_{1i}||H_{i1}||G_{11}^{(1i)}|\bold{1}(\Xi)\bold{1}(\tilde{\Xi}) \leq 4N^{10\epsilon/3+4\delta-1}\,.
\end{align}
Proceeding in the same manner as for the terms in the first category, we obtain for the remaining three terms the estimate $3C_4^2N^{3}N^{-D/2}\,,$
where $C_4$ is the constant corresponding to the $L^4$ norm estimate as in the definition of Wigner matrices.
For our particular example of term in the second category, we obtain the final estimate $4N^{10\epsilon/3+4\delta-1}+3C_4^2N^{3}N^{-D/2}\,.$
There are four truncated terms for each term of the second category so the final estimate for the terms in the second category 
\begin{align}
16N^{10\epsilon/3+4\delta-1}+12C_4^2N^{3}N^{-D/2}\,.
\end{align}
We finally obtain that
\begin{align}\label{concentrationdouble}
&-\frac{1}{N}\sum\limits_{i=1}^N\mathbb{E}_1G_{ii}^{(1i)}G_{11}^{(1i)}=-\frac{1}{N}\sum\limits_{i=1}^N\mathbb{E}_1G_{ii}G_{11}\nonumber\\&+ O\left(8N^{11\epsilon/3+5\delta/2-3/2}+12N^{-D/2}N^{5/2}+ 16N^{10\epsilon/3+4\delta-1}+12C_4^2N^{3}N^{-D/2}\right)\,.
\end{align}
Plugging \eqref{concentrationdouble} in the general identity, we obtain that
\begin{align}
&1+z\mathbb{E}_{1}G_{11} = -\frac{1}{N}\sum\limits_{i=1}^N\mathbb{E}_1G_{ii}G_{11}\nonumber\\
&+O\left(8N^{23\epsilon/6+5\delta/2-3/2} + 6N^{-D/2}C_6^3N^{5/2}+ 2N^{15\epsilon/6+3\delta/2-1}+3C_4^2N^{2}N^{-D/2}\right)\nonumber\\
&+O\left(8N^{11\epsilon/3+5\delta/2-3/2}+12N^{-D/2}N^{5/2}+ 16N^{10\epsilon/3+4\delta-1}+12C_4^2N^{3}N^{-D/2}\right)\,.
\end{align}

In order to not carry the estimates, we introduce the following notations
$$
f_1(N, \epsilon, \delta, D)=8N^{23\epsilon/6+5\delta/2-3/2} + 6N^{-D/2}C_6^3N^{5/2}+ 2N^{15\epsilon/6+3\delta/2-1}+3C_4^2N^{2}N^{-D/2},
$$
$$ 
f_2(N, \epsilon, \delta, D)=8N^{11\epsilon/3+5\delta/2-3/2}+12N^{-D/2}N^{5/2}+ 16N^{10\epsilon/3+4\delta-1}+12C_4^2N^{3}N^{-D/2}\,.
$$
Using Lemma \ref{lemma3.1} we obtain that 
\begin{align}
1+z\mathbb{E}_{1}G_{11} &= -\frac{1}{N}\sum\limits_{i=1}^NG_{ii}G_{11}+O_{\prec}\left(\frac{N^{5\delta/2}}{\sqrt{N\eta_0}}\right)\nonumber+O\left( f_1(N, \epsilon, \delta, D)+f_2(N, \epsilon, \delta, D)\right).
\end{align}
Using the Concentration of Measure Lemma \ref{concentration of measure result} and using Lemma $3.4(ii)$ from \cite{benaych2016lectures} \,, we further obtain that
\begin{align}
1+zG_{11}  &= -\frac{1}{N}\sum\limits_{i=1}^NG_{ii}G_{11}+O_{\prec}\left(|z|\frac{N^{3\delta/2}}{\sqrt{N\eta_0}}\right)+O_{\prec}\left(\frac{N^{5\delta/2}}{\sqrt{N\eta_0}}\right)\nonumber\\&+O\left( f_1(N, \epsilon, \delta, D)+f_2(N, \epsilon, \delta, D)\right).
\end{align}
In the same manner, we can redo the computation for $G_{jj} \,,\forall j \in \llbracket 1, N \rrbracket\,.$\\ 
Using that for fixed $ \delta \in (0, \delta_0)\,,$ $N^{5\delta/2} \geq N^{3\delta/2} \,,$ and
using the fact that a deterministic estimate is also an estimate with high probability, we obtain that
\begin{align}
1+zG_{jj} &= -\frac{1}{N}\sum\limits_{i=1}^NG_{ii}G_{jj}+O_{\prec}\left(\frac{(1+|z|)N^{5\delta/2}}{\sqrt{N\eta_0}}\right)\nonumber\\&+O_{\prec}\left( f_1(N, \epsilon, \delta, D)+f_2(N, \epsilon, \delta, D)\right)\,.
\end{align}
Since $\epsilon \in (0,\epsilon_0)$ and large $D>0$ are arbitrary, we can choose them such that all the terms that have $-D$ at the exponent are smaller than the ones without in $f_1(N, \epsilon, \delta, D)$ and in $f_2(N, \epsilon, \delta, D).$ 
Using the definition of $\prec$ to eliminate the constants and writing the terms left in $f_1(N, \epsilon, \delta, D)$ and in $f_2(N, \epsilon, \delta, D)$, we obtain that
\begin{align}
1+zG_{jj} &= -\frac{1}{N}\sum\limits_{i=1}^NG_{ii}G_{jj}+O_{\prec}\left(\frac{(1+|z|)N^{5\delta/2}}{\sqrt{N\eta_0}}\right)\nonumber\\
&+O_{\prec}(N^{23\epsilon/6+5\delta/2-3/2}+ N^{15\epsilon/6+3\delta/2-1}+N^{11\epsilon/3+5\delta/2-3/2} +N^{10\epsilon/3+4\delta-1})\,.
\end{align}
The term $N^{10\epsilon/3+4\delta-1}$ is the biggest term among the four. Estimating for fixed $\delta \in (0, \delta_0)\,,$ $N^{5\delta/2}\leq N^{5\delta}\,,$ using that $\eta_0=N$ and that $|z|>0$ we obtain  
\begin{align}
N^{10\epsilon/3+4\delta-1} \leq (1+|z|)N^{10\epsilon/3+5\delta-1}\,.
\end{align}
Taking the limit $\epsilon \to 0$, we finally obtain that
\begin{align*}
1+zG_{jj} = -\frac{1}{N}\sum\limits_{i=1}^NG_{ii}G_{jj}+O_{\prec}\left(\frac{(1+|z|)N^{5\delta}}{\sqrt{N\eta_0}}\right)\,.
\end{align*}
Averaging over $j \in \llbracket 1, N \rrbracket\,,$ we obtain the conclusion for fixed $\eta_0 >0$\,.
\begin{align}
1+zs+s^2 = O_{\prec}\left(\frac{(1+|z|)N^{5\delta}}{\sqrt{N\eta_0}}\right)\,.
\end{align}
It remains to prove that the equation
\begin{align}\label{3.47}
1+sz+s^2=O_{\prec}\left(\frac{(1+|z|)N^{5\delta}}{\sqrt{N\eta}}\right)
\end{align}
holds for all $ z  \in \bold{S}\,.$ \\
For this, following again the strategy in Lemma $5.5$ in \cite{bauerschmidt2017local}, set
$$ \eta_l=\eta_0+l/N^{4} \,, \hspace{4mm} l \in \llbracket 1, N^5 \rrbracket\,,$$
and $z_l=E+i\eta_l$\,. Since the Concentration of Measure Lemma and Lemma $(3.1)$ hold uniformly for any $\eta \geq \eta_0$\,, using the result for fixed $\eta_0$, and a union bound, we have that \eqref{3.47} holds simultaneously at all $z$ with $l \in \llbracket 1, N^5 \rrbracket$\,.
Since $(\eta_l)_l$ is a $ 1/N^{4}$-net of $\llbracket \eta_0, \eta_0+N \rrbracket$ and $ s$ and $G_{ij}$ are Lipschitz continuous  with the constant $1/\eta^2 \leq N^2 $\,, the claim follows.
\end{proof}

\section{Proof of the Weak Local Law}

In this section, we prove the main result of the paper. The proof uses a bootstrapping technique that is performed for fixed $ \delta \in (0, \delta_0)\,,$ with $\delta_0 \leq \frac{\gamma}{3} \,.$ The main advantage of using the bootstrapping technique is the ability to track information on the bounds on the scale $\eta$ to the scale $\eta/N^{\delta}\,.$ This section is divided into two subsections. The first one contains a stability analysis of the equation $m^2+mz+1=0\,,$ where $m$ is the Stieltjes transform of the Semicircle Law. In the second subsection the main Theorem is proved.
 
\subsection{Stability Analysis}
The Stieltjes transform of the SemiCircle law is the unique solution of the equation
$$m^2+mz+1=0\,. $$

To show that $m$ and $s$ are close we use the stability of the equation $m^2+mz+1=0$ in the form provided by the following deterministic result.
\begin{lemma}[Lemma $5.6$ of \cite{bauerschmidt2017local}]\label{Lem4.1}
Let $s : \mathbb{C}_+ \to \mathbb{C}_+$ be continuous. Set
\begin{align}\label{RR}
R :=s^2+sz+1\,.
\end{align}
For $ E \in \mathbb{R}$\,, $\eta_0 >0 $  and $\eta_1 \geq 3 \vee \eta_0$\,, suppose that there is a non-decreasing continuous function $r: [\eta_0, \eta_1] \to [0,1] $ such that $|R(E+i \eta)| \leq (1+|E+i\eta|)r(\eta)$ for all $\eta \in \llbracket \eta_0, \eta_1 \rrbracket $\,. Then, for all $z=E+i \eta$ with $\eta \in \llbracket \eta_0, \eta_1 \rrbracket$, we have that
$$|s-m|=O(F(r))\,.$$
\end{lemma}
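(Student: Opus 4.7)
I would first translate the bound on $R$ into a quadratic inequality for $t := |s-m|$ and then extract $t = O(F_z(r))$ via a case analysis combined with a continuity bootstrap propagated from $\eta = \eta_1$.

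\emph{Algebraic setup.} Let $m' := -z - m$ be the other root of $x^2 + xz + 1 = 0$. The factorization $s^2 + sz + 1 = (s-m)(s-m')$ yields $t \cdot |s-m'| = |R| \leq (1+|z|) r$. Writing $D := |m - m'| = \sqrt{|z^2 - 4|}$, I would combine two lower bounds for $|s - m'|$: first, since $s \in \mathbb{C}_+$ and $m' \in \mathbb{C}_-$, taking imaginary parts gives $|s - m'| \geq \operatorname{Im}(s) + \eta + \operatorname{Im}(m) \geq \eta$, hence $t \leq (1+|z|)r / \eta$; second, by the reverse triangle inequality $|s - m'| \geq |D - t|$, yielding the quadratic inequality
\begin{equation*}
t \, |D - t| \;\leq\; (1+|z|) \, r.
\end{equation*}

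\emph{Case analysis.} In the \emph{good regime} $4(1+|z|)r < D^2$, the solutions of $t(D-t) \leq (1+|z|)r$ in $[0, D]$ split into a \emph{small branch} $[0, t_-]$ with $t_- \leq 2(1+|z|)r/D$ and a \emph{big branch} $[t_+, D]$ with $t_+ \geq D/2$. In the \emph{bad regime} $4(1+|z|) r \geq D^2$, the bound $r \leq 1$ forces $|z^2 - 4| \leq 4(1+|z|)$, hence $|z|$ is bounded by an absolute constant, and the quadratic inequality then yields $t = O(\sqrt{r})$.

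\emph{Initialization and continuity.} At $\eta = \eta_1 \geq 3$, the identity $|z^2 - 4|^2 = (|z|^2 - 4)^2 + 16 \eta^2$ combined with $|z| \geq \eta$ gives $\eta D \geq 2(1+|z|)$ by a direct computation, whence $t(\eta_1) \leq (1+|z|)r/\eta_1 \leq (D/2)\, r \leq D/2$. Thus $t(\eta_1)$ lies in the small branch of the good regime (or in a bad regime) at $\eta_1$. Since $\eta \mapsto t(\eta)$ is continuous and the small branch is a connected component of the allowed set of $t$-values, $t$ remains in the small branch as long as the good regime holds, while in the bad regime the direct $O(\sqrt{r})$ bound applies.

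\emph{Matching $F_z(r)$.} The elementary inequality $(1 + |z|)/D \leq 3(1 + 1/D)$ (which follows from $|z|^2 \leq D^2 + 4$, i.e.\ $|z| \leq D + 2$) converts the small-branch bound into $t \leq 6 \, (1 + 1/\sqrt{|z^2-4|})\, r$. Moreover, the good-regime condition implies $D > 2\sqrt{r}$, so $(1 + 1/D)\, r \leq r + \sqrt{r}/2 = O(\sqrt{r})$ for $r \leq 1$. Combined with the bad-regime bound, this matches $F_z(r) = [(1 + 1/\sqrt{|z^2-4|})\, r] \wedge \sqrt{r}$. The delicate part I expect to wrestle with is the continuity argument at transitions between the good and bad regimes: when $t$ approaches $D/2$ near such a transition, one must use the $\eta$-bound $t \leq (1+|z|)r/\eta$ to rule out the possibility of $t$ landing in the big branch of an adjacent good regime, which would otherwise violate the final estimate.
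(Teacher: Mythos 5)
The paper gives no proof of this lemma; it is cited from \cite{bauerschmidt2017local}, so I assess your argument on its own merits.

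Your overall architecture is the standard one: factor $R = (s-m)(s-m')$, obtain the two lower bounds $|s-m'| \geq \eta$ and $|s-m'| \geq |D-t|$ where $D = \sqrt{|z^2-4|}$ and $t = |s-m|$, derive the quadratic inequality $t\,|D-t| \leq (1+|z|)r$, split into small/big branches, and propagate via continuity from $\eta_1 \geq 3$. The algebraic ingredients check out: the identity $|z^2-4|^2 = (|z|^2-4)^2 + 16\eta^2$ is correct, $|z| \leq D+2$ is correct, the initialization bound $\eta_1 D \geq 2(1+|z|)$ does hold for $\eta_1 \geq 3$ (since $D^2 \geq 4\eta_1 \geq 12$ gives $D \geq 2\sqrt{3} > 3.16$, and $3D \geq 2\bigl(1+\sqrt{D^2+4}\bigr)$ is equivalent to $5D^2 - 12D - 12 \geq 0$, satisfied for $D \geq 3.16$), and the conversion of the small-branch bound $2(1+|z|)r/D$ into $O(F_z(r))$ is fine.

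The gap is exactly at the step you flag as delicate, and the fix you propose does not close it. You claim that the $\eta$-bound $t \leq (1+|z|)r/\eta$ rules out the big branch at a bad-to-good transition. But at such a transition $4(1+|z|)r = D^2$, so the $\eta$-bound gives only $t \leq D^2/(4\eta)$, which is $\leq D/2$ precisely when $D \leq 2\eta$; yet $D^2 = |z^2-4| \geq 4\eta$ forces $D \geq 2\sqrt{\eta}$, so $D \leq 2\eta$ can fail for $\eta < 1$. A concrete instance: take $E = 0$, $r \equiv 13/16$ (constant, hence non-decreasing), $\eta_1 = 3$; the bad regime is $\eta \in [1/4, 3]$ and the transition is at $\eta = 1/4$, where $D = \sqrt{65}/4 \approx 2.02$, $D/2 \approx 1.01$, but your $\eta$-bound gives only $t \leq (65/64)/(1/4) = 65/16 \approx 4.06$. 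So continuity alone does not prevent $t$ from sliding into the big branch of the good-regime component $\eta < 1/4$, and the argument as written does not yield the conclusion there. What saves the lemma is that $|s-m'| \geq \eta$ is far from sharp: since $m' \in \mathbb{C}_-$ one has $|s-m'| \geq \operatorname{Im}(s) + \eta + \operatorname{Im}(m) = \operatorname{Im}(s) + \tfrac{1}{2}\bigl(\eta + \operatorname{Im}\sqrt{z^2-4}\,\bigr)$, and in the regime where transitions can occur one needs a quantitative lower bound on $\eta + \operatorname{Im}(m)$ in terms of $D$ (for instance, at $E=0$ one has $\eta + \operatorname{Im}(m) = (\eta+D)/2 > D/2$, which repairs the example above). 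Your write-up never introduces this sharper lower bound, so the continuity bootstrap is not actually established.
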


\subsection{Proof of the Weak Local Law using a Bootstrapping Argument}
This subsection contains the proof of the main Theorem of the paper. We first prove a lemma and a proposition that are fundamental for the bootstrapping argument.
The core of the proof of the main Theorem is an induction on the spectral scale, where information about $G$ is passed on from scale $\eta$ to scale $\eta/N^{\delta}\,,$ for $\delta \in (0, \delta_0)\,.$

In order to formalize this, we introduce the random error parameters
\begin{align*}
\Gamma \equiv \Gamma(z) \;:=\; \max_{k,l}|G_{kl}(z)|\vee 1, \hspace{5mm} \Gamma^* \equiv \Gamma^*(z)\;:=\; \sup_{\eta' \geq \eta}\Gamma(E+i\eta')\,.
\end{align*}

We prove the following preliminary result that allows us to propagate the bounds that we obtain for $G$ on a given scale, to weaker bounds on a smaller scale.
\begin{lemma}[Lemma $10.2$ in \cite{benaych2016lectures}]\label{Lem2.1}
For any $M >1$ and $z \in \mathbb{C}_+$\,, we have $\Gamma(E+i\eta/M) \leq M \Gamma(E+i\eta)\,.$
\end{lemma}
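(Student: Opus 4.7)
The plan is to apply the resolvent identity between the two spectral parameters $z_1 := E + i\eta/M$ and $z_2 := E + i\eta$, and then control the cross term using the Ward identity \eqref{eq5} together with the Cauchy--Schwarz inequality. Since $\Gamma(z) \geq 1$ by definition, the inequality is trivial when $\max_{k,l}|G_{kl}(z_1)| \leq 1$, so we may assume $\Gamma(z_1) = \max_{k,l}|G_{kl}(z_1)| > 1$.

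From $G(z_1) - G(z_2) = (z_1 - z_2)\,G(z_1)G(z_2)$, taking the $(k,l)$ entry gives
\[
G_{kl}(z_1) \;=\; G_{kl}(z_2) \;-\; i\eta\Bigl(1-\tfrac{1}{M}\Bigr)\sum_{m=1}^{N} G_{km}(z_1)\,G_{ml}(z_2).
\]
Applying the triangle inequality and then Cauchy--Schwarz to the sum yields
$\bigl|\sum_m G_{km}(z_1)G_{ml}(z_2)\bigr| \leq \bigl(\sum_m |G_{km}(z_1)|^2\bigr)^{1/2}\bigl(\sum_m |G_{ml}(z_2)|^2\bigr)^{1/2}$. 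The Ward identity \eqref{eq5} then controls each factor: $\sum_m|G_{km}(z_1)|^2 = \operatorname{Im}G_{kk}(z_1)/(\eta/M) \leq M\,\Gamma(z_1)/\eta$ and $\sum_m|G_{ml}(z_2)|^2 \leq \Gamma(z_2)/\eta$. Combining these estimates (and tracking the $(1-1/M) = (M-1)/M$ coefficient carefully),
\[
|G_{kl}(z_1)| \;\leq\; \Gamma(z_2) \;+\; \frac{M-1}{\sqrt{M}}\,\sqrt{\Gamma(z_1)\,\Gamma(z_2)}.
\]

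Taking the maximum over $k, l \in \llbracket 1, N\rrbracket$ on the left (the right-hand side is independent of $k, l$) and setting $x := \sqrt{\Gamma(z_1)}$, $y := \sqrt{\Gamma(z_2)}$, I would solve the quadratic inequality $x^2 - \tfrac{M-1}{\sqrt{M}}\,xy - y^2 \leq 0$. The key simplification is that the discriminant collapses to a perfect square: $(M-1)^2 + 4M = (M+1)^2$, which gives $x \leq \tfrac{y}{2\sqrt{M}}\bigl((M-1) + (M+1)\bigr) = y\sqrt{M}$. Squaring both sides yields the announced bound $\Gamma(z_1) \leq M\,\Gamma(z_2)$.

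The only delicate point is the final algebraic step: retaining the sharp coefficient $(1-1/M)$ (rather than the crude $\leq 1$) from the resolvent identity is essential for the discriminant $(M-1)^2 + 4M$ to factor as $(M+1)^2$, which in turn is what produces exactly the constant $M$ (as opposed to a larger multiple of $M$) in the final estimate.
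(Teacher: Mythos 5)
Your proof is correct, and it follows the standard route taken in the cited reference \cite{benaych2016lectures}: write the resolvent identity between $E+i\eta/M$ and $E+i\eta$, bound the cross term $\sum_m G_{km}(z_1)G_{ml}(z_2)$ by Cauchy--Schwarz, apply the Ward identity \eqref{eq5} to each factor, take the maximum over $k,l$, and solve the resulting quadratic inequality in $\sqrt{\Gamma(z_1)}$. The one place where you are a bit more careful than a casual reading of the reference would suggest is that you retain the factor $1-1/M = (M-1)/M$ from $z_1 - z_2 = -i\eta(1-1/M)$ instead of coarsely bounding it by $1$; as you observe, this is precisely what makes the discriminant $(M-1)^2 + 4M = (M+1)^2$ collapse to a perfect square and produces the exact constant $M$ rather than a larger multiple such as $M+2$. (The paper in question does not reprove this lemma but simply cites it, so your argument is a valid self-contained substitute.) One tiny bookkeeping point worth spelling out: the Ward identity as stated in \eqref{eq5} reads $\sum_j |G_{ij}|^2 = \eta^{-1}\operatorname{Im}G_{ii}$, i.e.\ it sums over the second index; to get the companion identity $\sum_m |G_{ml}(z_2)|^2 = \eta^{-1}\operatorname{Im}G_{ll}(z_2)$ you implicitly use $G(z)^* = G(\bar z)$ and $(G(\bar z)G(z))_{ll} = (2i\eta)^{-1}(G_{ll}(z)-\overline{G_{ll}(z)})$, which is fine but should be mentioned if one wants the argument to be fully self-contained.
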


The most important ingredient of the proof of the Local Semicircle Law is the following result, similar in spirit with Proposition $2.2$ in \cite{bauerschmidt2017local} \,. However, the proof uses extensively the Resolvent Expansion techniques.
\begin{proposition}\label{Prop4.3}
Let $ \delta \in (0, \delta_0)$ be fixed. If for $ z \in \bold{S}\,,$ we have $ \Gamma^*(z)  \prec N^{\delta}\,,$
it follows that
\begin{align}\label{equiv2.6}
\max_{i \in \llbracket 1, N \rrbracket } |m-G_{ii}| &\prec F\left( \frac{N^{5\delta}}{\sqrt{N\eta}}\right)N^{\delta}\,,\\
\max_{i \neq j \in \llbracket 1, N \rrbracket }|G_{ij}|&\prec \frac{N^{5\delta/2}}{\sqrt{N\eta}}\,.
\end{align}
\end{proposition}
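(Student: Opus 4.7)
The plan is to combine the algebraic identity produced by the resolvent expansion of Proposition \ref{prop2.2} with the stability of the self-consistent equation (Lemma \ref{Lem4.1}), and then to convert the averaged estimate on $s$ into an entrywise estimate on the diagonal $G_{jj}$, followed by a parallel resolvent expansion for the off-diagonal entries.

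First, since $\Gamma^*(z) \prec N^{\delta}$ implies $|G_{kl}(z')| \prec N^{\delta}$ for every $z' = E + i\eta'$ with $\eta' \geq \eta$, Proposition \ref{prop2.2} applies on all such scales and yields
\[
|s^2 + sz + 1| \;\prec\; (1+|z|)\frac{N^{5\delta}}{\sqrt{N\eta}}.
\]
Applying Lemma \ref{Lem4.1} with $r(\eta) = N^{5\delta}/\sqrt{N\eta_0}$ (made non-decreasing by taking the running maximum, then restricted back to scale $\eta$ after a standard net argument on $[\eta_0,N]$ together with Lipschitz continuity of $s$ in $z$ as in the end of the proof of Proposition \ref{prop2.2}) gives
\[
|s - m| \;\prec\; F_z\!\left(\frac{N^{5\delta}}{\sqrt{N\eta}}\right).
\]

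Second, I would return to the proof of Proposition \ref{prop2.2} \emph{before} averaging over $j$. The same expansion derives, for each $j \in \llbracket 1,N \rrbracket$, the individual identity
\[
1 + z G_{jj} + s\, G_{jj} \;=\; O_{\prec}\!\left(\frac{(1+|z|)N^{5\delta}}{\sqrt{N\eta}}\right),
\]
which I would rewrite by substituting $s = m + (s-m)$ and using $m(z+m) = -1$:
\[
(G_{jj} - m)(z+m) \;=\; -G_{jj}(s-m) + O_{\prec}\!\left(\frac{(1+|z|)N^{5\delta}}{\sqrt{N\eta}}\right).
\]
Since $|G_{jj}| \prec N^{\delta}$ and $|z+m|^{-1} = |m|$ is bounded by a constant on $\b S$ (this is precisely what the prefactor $1 + |z^2-4|^{-1/2}$ hidden in $F_z$ accounts for near the edge), dividing by $z+m$ and absorbing the deterministic error into $F_z(\cdot)$ produces
\[
|G_{jj} - m| \;\prec\; F_z\!\left(\frac{N^{5\delta}}{\sqrt{N\eta}}\right) N^{\delta}.
\]
Taking the maximum over $j$ uses the uniformity built into $\prec$.

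Third, for the off-diagonal bound, I would rerun the resolvent-expansion machinery of Proposition \ref{prop2.2} starting from the $(1,j)$ entry of $(H-z)G = I$ with $j \neq 1$, namely $z G_{1j} = \sum_{i} H_{1i} G_{ij}$. Expanding each $G_{ij}$ via the resolvent identity through $H^{(1i)}$ (independent of $h_i$), then taking $\mathbb{E}_1$, the terms linear in $h_i$ vanish by independence, while the terms quadratic in $h_i$ reproduce $-s\, G_{1j}$ up to a manageable error whose bookkeeping is identical to the one already performed in the proof of Proposition \ref{prop2.2} (truncation against $\Xi\cap\tilde\Xi$, Ward identity, and Lemma \ref{lemma3.1}). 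The resulting identity is
\[
(z+s)\, G_{1j} \;=\; O_{\prec}\!\left(\frac{N^{5\delta/2}}{\sqrt{N\eta}}\right),
\]
where the exponent is $5\delta/2$ rather than $5\delta$ because the ``$+1$'' produced by the diagonal identity is absent for $j \neq 1$, so no $|z|$-factor needs to be absorbed. Since $|z+s|$ is bounded below (using the bound on $s-m$ from the first step and the analogous lower bound on $|z+m|$), division yields $|G_{1j}| \prec N^{5\delta/2}/\sqrt{N\eta}$. By the same indexing symmetry used throughout Sections 2--3, the argument applies to any off-diagonal pair $(i,j)$.

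The hardest part I expect is not any single estimate but the careful verification that the lower bound on $|z+s|$ (and on $|z+m|$) is valid throughout $\b S$ with high probability: near the spectral edge $|z^2 - 4| \to 0$, and one must use exactly the $F_z$ bookkeeping in Lemma \ref{Lem4.1} — in particular the $\sqrt{r}$ branch — to guarantee that the stability estimate $|s-m| \prec F_z(N^{5\delta}/\sqrt{N\eta})$ is smaller than $|z+m|$, so that $z+s$ does not vanish and the division is legitimate. Extending from a fixed $\eta_0$ to all $\eta \in [N^{-1+\gamma},N]$ by a $1/N^4$-net and the Lipschitz continuity of $G_{ij}$ (constant $\eta^{-2} \leq N^2$), as in the end of the proofs of Lemma \ref{concentration of measure result} and Proposition \ref{prop2.2}, then upgrades the bounds uniformly over $z \in \b S$.
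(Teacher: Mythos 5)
Your treatment of the diagonal entries matches the paper's: Lemma~\ref{Lem4.1} gives $|s-m|\prec F_z(N^{5\delta}/\sqrt{N\eta})$, you return to the pre-averaged form $1+(s+z)G_{jj}=O_{\prec}\bigl((1+|z|)N^{5\delta}/\sqrt{N\eta}\bigr)$, substitute $z+m=-1/m$, and use $(1+|z|)|m|=O(1)$ together with $|G_{jj}|\prec N^{\delta}$; the paper performs the same algebra in a slightly different order.

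The off-diagonal argument is where you depart from the paper, and where there is a genuine gap. You expand $zG_{1j}=\sum_i H_{1i}G_{ij}$ to obtain $(z+s)G_{1j}=O_{\prec}\bigl(N^{5\delta/2}/\sqrt{N\eta}\bigr)$ and then divide by $z+s$. The paper instead starts from the identity $G_{12}=G_{12}(HG)_{11}-G_{11}(HG)_{12}$ and notes explicitly that after the resolvent expansion the two terms carrying $G_{ii}$ cancel; consequently the leading $zG_{12}$ and $-sG_{12}$ contributions never appear on the right-hand side, the bound $|\mathbb{E}_1 G_{12}|\prec N^{5\delta/2}/\sqrt{N\eta}$ drops out directly, and Lemma~\ref{concentration of measure result} finishes the argument — no random quantity is ever inverted. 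Your route instead hinges on a lower bound for $|z+s|$ that you flag as ``the hardest part'' but do not establish, and under the stated hypotheses it is not automatic: deterministically $|z+s|\geq|z+m|-|s-m|=1/|m|-|s-m|\geq 1-|s-m|$, and your step-one estimate gives only $|s-m|\prec F_z\bigl(N^{5\delta}/\sqrt{N\eta}\bigr)\leq\bigl(N^{5\delta}/\sqrt{N\eta}\bigr)^{1/2}$. Near $\eta=N^{-1+\gamma}$ the quantity under the square root is of order $N^{5\delta-\gamma/2}$, which already exceeds $1$ as soon as $\delta\geq\gamma/10$ — a regime the paper's choice $\delta_0\leq\gamma/3$ permits. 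There the bound on $s-m$ carries no smallness and the division cannot be justified, so the off-diagonal conclusion is not proved. (Your diagnosis that the danger lies at the spectral edge $|z^2-4|\to 0$ is also misplaced: $|z+m|=1/|m|\geq 1$ uniformly since $|m|\leq 1$; the real threat is small $\eta$.) The paper's starting identity $G_{12}=G_{12}(HG)_{11}-G_{11}(HG)_{12}$ is chosen precisely to remove this obstruction at the algebraic level.
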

\begin{proof}
Let $z_0=E+i\eta_0 \in \bold{S}\,,$ be given.  Set $\eta_1=N\,.$ Lemma \ref{Lem4.1} shows that with high probability for all $ \eta \in [\eta_0, \eta_1]$  the function $s$ satisfies \eqref{RR} with 
$$|R(z)| \leq (1+|z|)r(\eta)\,,$$
where $r(\eta) := \frac{N^{5\delta}}{\sqrt{N\eta}}$\,. Hence, the function $r$ is decreasing when $\eta$ is increasing. So, we have that $ r \in [0,1]$\,. Using Lemma \ref{Lem4.1} and using the fact that a deterministic estimate is also an estimate with high probability, we obtain that $|m-s|\prec F\left(\frac{N^{5\delta}}{\sqrt{N\eta}}\right)$ for all $ \eta \in [\eta_0, \eta_1]\,.$ \\
We now estimate $G_{jj}-m\,,$ for $j \in \llbracket 1, N \rrbracket\,.$
Before averaging over $j$\,, the estimate
\begin{align}\label{lala}
1+sz+s^2 =O_{\prec}\left(\frac{(1+|z|)N^{5\delta}}{\sqrt{N\eta}}\right)
\end{align}
has the form
\begin{align}\label{lala1}
1+(s+z)G_{jj} =O_{\prec}\left( \frac{(1+|z|)N^{5\delta}}{\sqrt{N\eta}}\right)\,.
\end{align}
Using the estimate in \eqref{lala1} together with $|s-m| \prec F\left(\frac{N^{5\delta}}{\sqrt{N\eta}}\right)\,,$ we find
\begin{equation}
|1+(z+m)G_{jj}| \prec F\left(\frac{N^{5\delta}}{\sqrt{N\eta}}\right)|G_{jj}|+\frac{(1+|z|)N^{5\delta}}{\sqrt{N\eta}}\,.
\end{equation}
Using the equation that the Stieltjes transform satisfies, it is easy to deduce that $z+m=-\frac{1}{m}$\,. Also, using that $m$ is bounded and is the transform of a compactly supported measure, we deduce that $(1+|z|)|m|=O(1)$\,.\\
Hence, we obtain that
\begin{equation}
|m-G_{jj}| \prec F\left(\frac{N^{5\delta}}{\sqrt{N\eta}}\right)|G_{jj}| + \frac{N^{5\delta}}{\sqrt{N\eta}}\,.
\end{equation}
Furthermore, using Lemma $3.4(ii)$ from \cite{benaych2016lectures}, we obtain 
\begin{equation}
|m-G_{jj}| \prec F\left(\frac{N^{5\delta}}{\sqrt{N\eta}}\right)N^{\delta}+ \frac{N^{5\delta}}{\sqrt{N\eta}}\,.
\end{equation}
Therefore, we finally obtain that
\begin{equation}
|m-G_{jj}| \prec F\left(\frac{N^{5\delta/2}}{\sqrt{N\eta}}\right)N^{\delta}\,.
\end{equation}
For estimating the off-diagonal entries we use the resolvent identity.
From $$(HG)_{ij} = ((H-z)G)_{ij}+zG_{ij}=\delta_{ij}+zG_{ij}\,,$$ it follows that 
$$ G_{12}=G_{12}(HG)_{11}-G_{11}(HG)_{12} $$ 
and therefore we have
\begin{align}
\mathbb{E}_1G_{12}=\sum\limits_{i=1}^N\mathbb{E}_1(G_{12}H_{1i}G_{i1}-G_{11}H_{1i}G_{i2})\nonumber\\
\end{align}
Using the resolvent identity for $G_{12}\,,$ $G_{i1}\,,$ $G_{11}$ and $G_{i2}$, we obtain that 
\begin{align}
\mathbb{E}_1\sum\limits_{i=1}^N(G_{12}H_{1i}G_{i1}-G_{11}H_{1i}G_{i2})&=\mathbb{E}_1\sum\limits_{i=1}^N(G_{12}^{(1i)}H_{1i}G_{i1}^{(1i)}-G_{11}^{(1i)}H_{1i}G_{i2}^{(1i)})\nonumber\\
&+\mathbb{E}_1\sum\limits_{i=1}^N|H_{1i}|^2(-G_{12}^{(1i)}G_{ii}G_{11}^{(1i)}-G_{12}^{(1i)}G_{1i}G_{i1}^{(1i)}\nonumber\\
&-G_{i1}^{(1i)}G_{1i}G_{12}^{(1i)}-G_{i1}^{(1i)}G_{11}G_{i2}^{(1i)}\nonumber\\
&+G_{11}^{(1i)}G_{ii}G_{12}^{(1i)}+G_{11}^{(1i)}G_{i1}G_{i2}^{(1i)}\nonumber\\
&+ G_{i2}^{(1i)}G_{1i}G_{11}^{(1i)}+G_{i2}^{(1i)}G_{11}G_{i1}^{(1i)})\,.
\end{align}
We observe that the terms that contain $G_{ii}$ are the same but with opposite signs, so we are left with $6$ terms that contain $|H_{1i}|^2\,.$ Using another resolvent identity for $G_{ii}$, $G_{1i}$, $G_{i1}$ and $G_{11}$, we obtain that
\begin{align}\label{4.21}
&\mathbb{E}_1\sum\limits_{i=1}^N(G_{12}H_{1i}G_{i1}-G_{11}H_{1i}G_{i2})=\mathbb{E}_1\sum\limits_{i=1}^N(G_{12}^{(1i)}H_{1i}G_{i1}^{(1i)}-G_{11}^{(1i)}H_{1i}G_{i2}^{(1i)})\nonumber\\
&+O(\mathbb{E}_1\sum\limits_{i=1}^N|H_{1i}|^2(|G_{12}^{(1i)}||G^{(1i)}_{1i}||G_{i1}^{(1i)}|+|G_{i1}^{(1i)}||G^{(1i)}_{1i}||G_{12}^{(1i)}|+|G_{i1}^{(1i)}||G^{(1i)}_{11}||G_{i2}^{(1i)}|))\nonumber\\
&+O(\mathbb{E}_1\sum\limits_{i=1}^N|H_{1i}|^2(|G_{11}^{(1i)}||G^{(1i)}_{i1}||G_{i2}^{(1i)}|+ |G_{i2}^{(1i)}||G^{(1i)}_{1i}||G_{11}^{(1i)}|+|G_{i2}^{(1i)}||G^{(1i)}_{11}||G_{i1}^{(1i)}|))\nonumber\\
&+O(\mathbb{E}_1\sum\limits_{i=1}^N|H_{1i}|^3\bar{G})\,,
\end{align}
where $\bar{G}$ encodes all the products of entries of $G$ that appear in the corresponding terms.\\
Using the same idea as in \cite{bauerschmidt2017local} , $(5.60)$, we get rid of the terms that contain first powers only of entries of $H\,.$ Hence we are left with terms that contain at least $|H_{1i}|^2\,.$ We split these terms into two categories depending on the numbers of entries of $H$ that they contain. The terms in the same category are estimated in the same manner. We truncate the summations that appear with respect to the events $\Xi$ and $\tilde{\Xi}$ as in \eqref{event}\,. For terms in all categories that are truncated according to at least one low probability event, we use the Cauchy-Schwarz inequality argument as in section $2$\,. Using the definition of stochastic domination we obtain that they are smaller than the remaining terms in \eqref{4.21}\,. We further estimate terms of the form $\mathbb{E}_1|H_{1i}|^2|G_{12}^{(1i)}||G_{1i}||G_{i1}^{(1i)}|\bold{1}(\Xi)\bold{1}(\tilde{\Xi})\,.$
For a term in the first category, using Cauchy-Schwarz inequality, Lemma \ref{Gamma0}\,, Lemma \ref{2.3} and Ward identity for $\eta_0=N$ we obtain that
\begin{align}
\mathbb{E}_1|H_{1i}|^2|G_{12}^{(1i)}||G_{1i}||G_{i1}^{(1i)}|\bold{1}(\Xi)\bold{1}(\tilde{\Xi})&\leq \sqrt{\mathbb{P}(\Xi)}\sqrt{\mathbb{E}_1\sum\limits_{i=1}^NN|G_{12}^{(1i)}|^2|H_{1i}|^4|G_{i1}^{(1i)}|^2|G_{1i}|^2}\nonumber\\
&\leq\frac{4N^{17\epsilon/6+5\delta/2}}{\sqrt{N\eta_0}}\,.
\end{align}
 In addition, since $|H_{ij}|^3 \leq N^{3\epsilon-3/2}$ on $\tilde{\Xi}$, we obtain that the terms in the second category are smaller than the terms in the first category.\\
Using the definition of $\prec\,,$ and taking the limit $\epsilon \to 0\,,$ we obtain that
\begin{align}
|\mathbb{E}_1G_{12}| \prec \frac{N^{5\delta/2}}{\sqrt{N\eta_0}}\,.
\end{align}
Using the same procedure as in the proof of the Concentration of Measure Lemma, we track the bound to smaller scales, i.e.
\begin{align}
|\mathbb{E}_1G_{12}| \prec \frac{N^{5\delta/2}}{\sqrt{N\eta}}\,.
\end{align}
Using the Concentration of Measure Lemma, we obtain that
\begin{align}
|G_{12}-\mathbb{E}_1G_{12}| \prec \frac{N^{3\delta/2}}{\sqrt{N\eta}}
\end{align}
Furthermore using the estimate for $\mathbb{E}_1G_{12}\,,$ and the definition of $ \prec\,,$ we obtain that
\begin{align}
|G_{12}| \prec \frac{N^{3\delta/2}}{\sqrt{N\eta}} +\frac{N^{5\delta/2}}{\sqrt{N\eta}}\,.
\end{align}
We finally obtain that
$$|G_{12}| \prec \frac{N^{5\delta/2}}{\sqrt{N\eta}} \,.$$
By symmetry and a union bound, the claim holds again for $12$ replaced with $ij\,,$ for $i, j \in \llbracket 1, N \rrbracket\,.$
\end{proof}
\noindent

The proof of the Local Law follows with a similar analysis as in \cite{bauerschmidt2017local}\,.

\begin{proof}[\textbf{Proof of the Local Law}]
\noindent
We first note that the proof of the theorem for the cases $|E|>N$ and $\eta \geq N$ is trivial. Hence, it suffices to prove the Theorem for $ z \in \bold{S}\,.$ Since $G$ is Lipschitz continuous in $z$ with Lipschitz constant bounded by $\frac{1}{\eta^2} \leq N^2\,,$ it moreover suffices to prove the Theorem for $z \in \bold{S} \cap (N^{-4}\mathbb{Z}^2)\,.$ By a union bound, it suffices to prove the Theorem for each $E \in [-N,N]\cap N^{-4}\mathbb{Z}\,.$ \\
Fix therefore $E \in [-N, N]\cap (N^{-4}\mathbb{Z})\,.$ \\Let $$K \;:=\;\max\{ k \in \mathbb{N} : N/N^{k\delta} \geq N^{-1+\gamma} \}\,.$$\\
Clearly, $K \leq \left[\frac{2-\gamma}{\delta}\right],$ i.e. $ K \leq \frac{1}{\delta}\,.$ For $k \in \llbracket 0, K \rrbracket\,, $ set $\eta_k=\frac{N}{N^{k\delta}}\,,$ and $z_k:=E+i\eta_k\,.$ By induction on $k$ we shall prove that 
\begin{align}\label{equiv2.7}
\Gamma^*(z_k) \prec 1\,,
\end{align}
for $k \in \llbracket 0 , K \rrbracket\,.$\\
 The claim in \eqref{equiv2.7} is trivial for $k=0\,,$ since then $\eta_k=N$ and therefore we have deterministically $\Gamma^*(z_k) \leq 1\,.$ Now suppose that \eqref{equiv2.7} holds for some $k \in \llbracket 0, K \rrbracket\,.$ Then Lemma \ref{Lem2.1} applied with $\eta=\eta_k$ and $M=N^{\delta}\,, $ implies that 
$\Gamma^*(z_{k+1}) \prec N^{\delta}\,. $
We apply Proposition \eqref{Prop4.3} for $z=z_{k+1}$\,. \\
To this end, we have that $\max_{i \in \llbracket 1, N \rrbracket}|m-G_{ii}| \prec F\left(\frac{N^{5\delta}}{\sqrt{N\eta}}\right)N^{\delta}$ and $\max_{i \neq j \in \llbracket 1, N \rrbracket }|G_{ij}| \prec \frac{N^{5\delta/2}}{\sqrt{N\eta}}\,.$

Since $|m|\leq 1\,,$ $\frac{N^{3\delta/2}}{\sqrt{N\eta}} \leq 1$ and $F\left( \frac{N^{5\delta}}{\sqrt{N\eta}}\right)N^{\delta} \leq 1\,,$ we conclude that $\Gamma^*(z_{k+1}) \leq 1 $\,. Using the fact that a deterministic bound implies high probability bound we finally obtain that $\Gamma^*(z_{k+1}) \prec 1  \,.$
This concludes the proof of the induction step for all $k \in \llbracket 0, K \rrbracket\,.$
After $K$ steps of bootstrapping we have that
\begin{align}
\max_{i \in \llbracket 1, N \rrbracket } |m-G_{ii}|  &\prec F\left( \frac{N^{5\delta}}{\sqrt{N\eta}}\right)N^{\delta}\,,\\
\max_{i \neq j \in \llbracket 1, N \rrbracket }|G_{ij}| &\prec \frac{N^{5\delta/2}}{\sqrt{N\eta}}\,.
\end{align}
Using the definition of $\prec$ we obtain that
\begin{align}
\max_{i \in \llbracket 1, N \rrbracket } |m-G_{ii}| &\prec F\left( \frac{1}{\sqrt{N\eta}}\right)\,,\\
\max_{i \neq j \in \llbracket 1, N \rrbracket }|G_{ij}| &\prec\frac{1}{\sqrt{N\eta}}\,,
\end{align}
which yields the desired conclusion.
\end{proof}

%\end{section}

%\end{section}

\newpage

\bibliography{WL.bib}

\begin{thebibliography}{10}

\bibitem{ajanki2016stability}
Oskari Ajanki, L{\'a}szl{\'o} Erdos, and Torben Kr{\"u}ger.
\newblock Stability of the matrix dyson equation and random matrices with
  correlations.
\newblock {\em arXiv preprint arXiv:1604.08188}, 2016.

\bibitem{ajanki2016local}
Oskari~H Ajanki, L{\'a}szl{\'o} Erd{\H{o}}s, and Torben Kr{\"u}ger.
\newblock Local spectral statistics of gaussian matrices with correlated
  entries.
\newblock {\em Journal of statistical physics}, 163(2):280--302, 2016.

\bibitem{ajanki2017universality}
Oskari~H Ajanki, L{\'a}szl{\'o} Erd{\H{o}}s, and Torben Kr{\"u}ger.
\newblock Universality for general wigner-type matrices.
\newblock {\em Probability Theory and Related Fields}, 169(3-4):667--727, 2017.

\bibitem{alt2018local}
Johannes Alt, L{\'a}szl{\'o} Erd{\H{o}}s, Torben Krueger, et~al.
\newblock Local inhomogeneous circular law.
\newblock {\em The Annals of Applied Probability}, 28(1):148--203, 2018.

\bibitem{alt2017local}
Johannes Alt, L{\'a}szl{\'o} Erd{\H{o}}s, Torben Kr{\"u}ger, et~al.
\newblock Local law for random gram matrices.
\newblock {\em Electronic Journal of Probability}, 22, 2017.

\bibitem{alt2017location}
Johannes Alt, Laszlo Erdos, Torben Kr{\"u}ger, and Yuriy Nemish.
\newblock Location of the spectrum of kronecker random matrices.
\newblock {\em arXiv preprint arXiv:1706.08343}, 2017.

\bibitem{bauerschmidt2017local}
Roland Bauerschmidt, Antti Knowles, and Horng-Tzer Yau.
\newblock Local semicircle law for random regular graphs.
\newblock {\em Communications on Pure and Applied Mathematics},
  70(10):1898--1960, 2017.

\bibitem{benaych2016lectures}
Florent Benaych-Georges and Antti Knowles.
\newblock Lectures on the local semicircle law for wigner matrices.
\newblock {\em arXiv preprint arXiv:1601.04055}, 2016.

\bibitem{bloemendal2014isotropic}
Alex Bloemendal, L{\'a}szl{\'o} Erdos, Antti Knowles, Horng-Tzer Yau, and Jun
  Yin.
\newblock Isotropic local laws for sample covariance and generalized wigner
  matrices.
\newblock {\em Electron. J. Probab}, 19(33):1--53, 2014.

\bibitem{boucheron2013concentration}
St{\'e}phane Boucheron, G{\'a}bor Lugosi, and Pascal Massart.
\newblock {\em Concentration inequalities: A nonasymptotic theory of
  independence}.
\newblock Oxford university press, 2013.

\bibitem{bourgade2014universality}
Paul Bourgade, L{\'a}szl{\'o} Erd{\H{o}}s, Horng-Tzer Yau, et~al.
\newblock Universality of general $\beta$-ensembles.
\newblock {\em Duke Mathematical Journal}, 163(6):1127--1190, 2014.

\bibitem{bourgade2014local}
Paul Bourgade, Horng-Tzer Yau, and Jun Yin.
\newblock Local circular law for random matrices.
\newblock {\em Probability Theory and Related Fields}, 159(3-4):545--595, 2014.

\bibitem{che2017universality}
Ziliang Che et~al.
\newblock Universality of random matrices with correlated entries.
\newblock {\em Electronic Journal of Probability}, 22, 2017.

\bibitem{erdHos2012spectral}
L{\'a}szl{\'o} Erd{\H{o}}s, Antti Knowles, Horng-Tzer Yau, and Jun Yin.
\newblock Spectral statistics of erd{\H{o}}s-r{\'e}nyi graphs ii: Eigenvalue
  spacing and the extreme eigenvalues.
\newblock {\em Communications in Mathematical Physics}, 314(3):587--640, 2012.

\bibitem{erdHos2013local}
L{\'a}szl{\'o} Erd{\H{o}}s, Antti Knowles, Horng-Tzer Yau, Jun Yin, et~al.
\newblock The local semicircle law for a general class of random matrices.
\newblock {\em Electronic Journal of Probability}, 18, 2013.

\bibitem{erdHos2017random}
L{\'a}szl{\'o} Erd{\H{o}}s, Torben Kr{\"u}ger, and Dominik Schr{\"o}der.
\newblock Random matrices with slow correlation decay.
\newblock {\em arXiv preprint arXiv:1705.10661}, 2017.

\bibitem{erdHos2010bulk}
L{\'a}szl{\'o} Erd{\H{o}}s, Sandrine P{\'e}ch{\'e}, Jos{\'e}~A Ram{\'\i}rez,
  Benjamin Schlein, and Horng-Tzer Yau.
\newblock Bulk universality for wigner matrices.
\newblock {\em Communications on Pure and Applied Mathematics}, 63(7):895--925,
  2010.

\bibitem{erdHos2009local}
L{\'a}szl{\'o} Erd{\H{o}}s, Benjamin Schlein, and Horng-Tzer Yau.
\newblock Local semicircle law and complete delocalization for wigner random
  matrices.
\newblock {\em Communications in Mathematical Physics}, 287(2):641--655, 2009.

\bibitem{erdHos2011universality}
L{\'a}szl{\'o} Erd{\H{o}}s, Benjamin Schlein, and Horng-Tzer Yau.
\newblock Universality of random matrices and local relaxation flow.
\newblock {\em Inventiones mathematicae}, 185(1):75--119, 2011.

\bibitem{erdHos2012local}
L{\'a}szl{\'o} Erd{\H{o}}s, Benjamin Schlein, Horng-Tzer Yau, Jun Yin, et~al.
\newblock The local relaxation flow approach to universality of the local
  statistics for random matrices.
\newblock In {\em Annales de l'Institut Henri Poincar{\'e}, Probabilit{\'e}s et
  Statistiques}, volume~48, pages 1--46. Institut Henri Poincar{\'e}, 2012.

\bibitem{erdos2010universality}
L{\'a}szl{\'o} Erdos, Horng-Tzer Yau, and Jun Yin.
\newblock Universality for generalized wigner matrices with bernoulli
  distribution.
\newblock {\em arXiv preprint arXiv:1003.3813}, 2010.

\bibitem{erdHos2012rigidity}
L{\'a}szl{\'o} Erd{\H{o}}s, Horng-Tzer Yau, and Jun Yin.
\newblock Rigidity of eigenvalues of generalized wigner matrices.
\newblock {\em Advances in Mathematics}, 229(3):1435--1515, 2012.

\bibitem{he2017mesoscopic}
Yukun He, Antti Knowles, et~al.
\newblock Mesoscopic eigenvalue statistics of wigner matrices.
\newblock {\em The Annals of Applied Probability}, 27(3):1510--1550, 2017.

\bibitem{he2017isotropic}
Yukun He, Antti Knowles, and Ron Rosenthal.
\newblock Isotropic self-consistent equations for mean-field random matrices.
\newblock {\em Probability Theory and Related Fields}, pages 1--47, 2017.

\bibitem{khorunzhy1996asymptotic}
Alexei~M Khorunzhy, Boris~A Khoruzhenko, and Leonid~A Pastur.
\newblock Asymptotic properties of large random matrices with independent
  entries.
\newblock {\em Journal of Mathematical Physics}, 37(10):5033--5060, 1996.

\bibitem{knowles2017anisotropic}
Antti Knowles and Jun Yin.
\newblock Anisotropic local laws for random matrices.
\newblock {\em Probability Theory and Related Fields}, 169(1-2):257--352, 2017.

\bibitem{lee2013local}
Ji~Oon Lee and Kevin Schnelli.
\newblock Local deformed semicircle law and complete delocalization for wigner
  matrices with random potential.
\newblock {\em Journal of Mathematical Physics}, 54(10):103504, 2013.

\bibitem{lee2015edge}
Ji~Oon Lee and Kevin Schnelli.
\newblock Edge universality for deformed wigner matrices.
\newblock {\em Reviews in Mathematical Physics}, 27(08):1550018, 2015.

\bibitem{nemish2017local}
Yuriy Nemish et~al.
\newblock Local law for the product of independent non-hermitian random
  matrices with independent entries.
\newblock {\em Electronic Journal of Probability}, 22, 2017.

\bibitem{tao2011random}
Terence Tao and Van Vu.
\newblock Random matrices: universality of local eigenvalue statistics.
\newblock {\em Acta mathematica}, 206(1):127, 2011.

\bibitem{tao2013random}
Terence Tao and Van Vu.
\newblock Random matrices: Sharp concentration of eigenvalues.
\newblock {\em Random Matrices: Theory and Applications}, 2(03):1350007, 2013.

\end{thebibliography}

\end{document}